\font \tencyr = wncyr10
\def\cyr{\tencyr\cyracc}
\newtheorem{theorem}{Theorem}[section]
\newtheorem{corollary}[theorem]{Corollary}
\newtheorem{lemma}[theorem]{Lemma}
\theoremstyle{definition}
\newtheorem{remark}[theorem]{Remark}
\theoremstyle{definition}
\theoremstyle{definition}
\newtheorem{assumption}[theorem]{Assumption}
\newcommand{\mysection}[1]{\section{#1}
\setcounter{equation}{0}}
\newcommand{\osc}{\mathop{\hbox{osc}}}
 \def\dashint{%
 \operatorname%
 {\,\,\text{\bf--}\kern-.98em\DOTSI\intop\ilimits@\!\!}}
\newcommand\tr{\hbox{\rm tr}_{2}\,}
\def\sfa{{\sf a}}
\def\bM{\mathbb{M}}
\def\bQ{\mathbb{Q}}
\def\bR{\mathbb{R}}
\title[Elliptic and parabolic equations]
{Second-order elliptic and parabolic equations with
$B(\bR^{2}, VMO)$   coefficients }
\author{Hongjie Dong}
\address{Division of Applied Mathematics, Brown University,
182 George Street, Providence, RI 02912}
\email{Hongjie\_Dong@brown.edu}
\thanks{The work of the first author was partially supported by
NSF Grant DMS-0635607 from IAS and NSF Grant DMS-0800129. }
\author{N.V. Krylov}
\thanks{The work of the second author was partially supported by
NSF Grant DMS-0653121}
\address{127 Vincent Hall, University of Minnesota, Minneapolis, MN
 55455}
\email{krylov@math.umn.edu}
 \keywords{
Second-order elliptic and parabolic equations, vanishing mean oscillation,
  VMO coefficients, Sobolev spaces}
\subjclass[2000]{35K10, 35K20, 35J15}
\begin{document}

\begin{abstract}
The solvability in Sobolev spaces $W^{1,2}_p$ is proved for nondivergence
form second order parabolic equations for $p>2$ close to 2. The leading
coefficients are assumed to be measurable in the time variable and two
coordinates of space variables, and almost VMO (vanishing mean oscillation)
with respect to the other coordinates. This implies the $W^{2}_p$-solvability
for the same $p$ of nondivergence form elliptic equations with leading
coefficients measurable in two coordinates and VMO in the others. Under
slightly different assumptions, we also obtain the solvability results when
$p=2$.
\end{abstract}

\maketitle

\mysection{Introduction}
                                                \label{sec1}

In this paper, we consider the $W^{1,2}_{p}$-solvability of parabolic equations in nondivergence form:
\begin{equation}
                                                \label{parabolic}
L u-\lambda u=f,
\end{equation}
where $\lambda\ge 0$ is a constant, $f\in L_p$, and
\begin{equation}
                                                    \label{9.30.3}
L u=-u_t+a^{jk}D_{jk}u+b^{j}D_ju+cu.
\end{equation}
We assume that
 all the coefficients are bounded and measurable, and $a^{jk}$ are symmetric
and uniformly elliptic, i.e.
$$
|b^j|+|c|\le  K,
\quad a^{jk}=a^{kj},
\quad\delta|\xi|^{2}\leq
a^{jk}\xi^{j}\xi^{k}\leq\delta^{-1}|\xi|^{2}.
$$
If all the coefficients are time-independent, we also consider
the
$W^{2}_{p}$-solvability of elliptic equations in nondivergence
form:
\begin{equation}
                                                \label{elliptic}
M u-\lambda u=f,
\end{equation}
where
$$ M u=a^{jk}D_{jk}u+b^{j}D_ju+cu.
$$

We concentrate on rather irregular coefficients.
The Sobolev space theory of second-order parabolic and elliptic
equations with discontinuous coefficients was studied
extensively in the last thirty years. One important class of
discontinuous coefficients contains functions with vanishing
mean oscillation (VMO), the study of which was initiated in
\cite{CFL1} and continued in \cite{CFL2} and \cite{BC93}
(see also the references in \cite{Krylov_2007_mixed_VMO}).
The proofs in these references are based on the Calder\'on-Zygmund
theorem and the Coifman-Rochberg-Weiss commutator theorem.
Before that the Sobolev space theory had been established for
some other types of discontinuous coefficients; see, for
instance, \cite{Lo72a, Lo72b, Chi}.

In \cite{Krylov_2005}, the second author gave a unified
approach to investigating the $L_p$ solvability of both
divergence and nondivergence form parabolic and elliptic equations with
leading coefficients that are in VMO in the spatial
variables (and measurable in the time variable in the parabolic case).
Unlike the arguments in \cite{CFL1,CFL2,BC93}, the proofs in
\cite{Krylov_2005} rely mainly on pointwise estimates of sharp
functions of spatial derivatives of solutions. By doing this,
VMO coefficients are treated in a rather straightforward
manner. This method was later improved and generalized in a
series of papers
\cite{Krylov_2007_mixed_VMO,KK2,KK1,Ki1,Ki2,Ki3,Krylov08,DK08}.

The theory of elliptic and parabolic equations with partially
VMO coefficients was originated in \cite{KK2}. In \cite{KK2}
the $W^2_p$-solvability for any $p> 2$ was established for
nondivergence form elliptic equations with leading coefficients
measurable in one variable and VMO in the others. This result
was extended in \cite{KK1} to parabolic equations with leading
coefficients measurable in a spatial variable and VMO in the
others. For nondivergence form parabolic equations, more
general solvability results were obtained later in
\cite{Ki1,Ki2,Ki3}, in which most leading coefficients are
measurable in the time variable as well as one spatial
variable, and VMO in the other variables.

A natural question to ask is whether we still have the
$W^2_p$-solvability for elliptic equations if the leading
coefficients are measurable in {\em two} spatial variables and,
say, VMO in the others. Unfortunately, the answer is negative
for general $p>2$. Indeed, an example by Ural'tseva (see
\cite{LU}) tells us that even with  leading coefficients
depending only on the first two coordinates, there is no unique
solvability in $W^2_p$ for any fixed $p>2$ if the ellipticity
constant is sufficiently small. Nevertheless, Ural'tseva's
example does not rule out
 the possibility of  $W^2_p$-solvability for
$p$ sufficiently close to $2$ depending on the ellipticity
constant. This is the main motivation of our article.

In this article, we establish the solvability in Sobolev spaces
$W^{1,2}_p$ for nondivergence form second-order parabolic
equations for $p>2$ close to 2 (Theorem \ref{thm1}). The
leading coefficients are assumed to be measurable in the time
variable $t$ and two space variables $(x^1,x^2)$, and VMO with
respect to the others. Additionally, we assume $a^{11}+a^{22}$
is uniformly continuous with respect to $(x^1,x^2)$. This
result implies, in particular, the $W^{2}_p$-solvability for
the same $p$ of nondivergence form elliptic equations with
leading coefficients measurable in two coordinates and VMO in
the others (Theorem \ref{thm3}). Thus we give a positive answer
to the aforementioned question for those $p$ in a restricted
range. An interesting application of Theorem \ref{thm3}, shown
at the end of Section \ref{mainsec}, is the $W^2_p$-solvability
of elliptic equations in domains with rough coefficients. We
also investigate the case when $p=2$, which is of independent
interest. For elliptic equations, the $W^{2}_2$-solvability is
established when the leading coefficients are measurable
function of $(x^1,x^2)$ only. This extends a previously known
result proved in \cite{Chi} and \cite{KK2} where the
coefficients only depend  on $x^1$. For parabolic
equations, we obtain
the $W^{1,2}_2$-solvability under the
condition that $a^{jk}$ depend only on $(t,x^1,x^2)$ and
$a^{11}+a^{22}$ is uniformly continuous in $(x^1,x^2)$.

Next we give a brief description of our arguments. The proofs
are based on the aforementioned method from \cite{Krylov_2005}.
However, since $a^{ij}$ are merely measurable in $(x^1,x^2)$,
we are only able to estimate the sharp function of a portion of
the Hessian matrix $D^2u$ (Theorem \ref{thm4.1}), more
specifically, $D^2_{x''}u$ (see the beginning of the next
section for the notation). To bound the $W^{1,2}_p$ norm of the
solution by the $L_p$ norms of the right-hand side of the
equation and $D^2_{x''}u$, we use a result in \cite{Kr70}
proved for 2D parabolic equations with measurable coefficients.
These together with the $W^{1,2}_2$-solvability obtained in
Section \ref{sec3} enable us to establish the $W^{1,2}_p$
estimate.

An outline of the paper: in the next section, we introduce the
notation and state the main results, Theorem \ref{thm1},
\ref{thm3}, \ref{thm2},  and \ref{thm4}. Section \ref{sec2}
contains a few preliminary estimates. In Section \ref{sec3} we
establish the $W^{1,2}_2$-solvability and estimate the sharp
function of $D^2_{x''}u$. We finish the proof of
$W^{1,2}_p$-solvability in the last section by combining the
results in the previous sections.

\mysection{Main results}
                                \label{mainsec}

First we introduce some notation. Let $d\geq2$. A typical point
in $\bR^{d}$ is denoted by $x=(x^{1},...,x^{d})$. If $d\geq3$
we write
$x=(x',x'')$, where $x'=(x^{1},x^{2})$ and
$x''=(x^{3},...,x^{d})$.

We set
$$
D_{j}u=u_{x^j},\quad D_{jk}u=u_{x^jx^k},\quad
D_t u=u_t.
$$
By $Du$ and $D^{2}u$ we
mean the gradient and the Hessian matrix
of $u$. On many occasions we need to take these objects relative to only
part of variables. The reader understands
the meaning of the following notation which we use if  $d\geq3$:
$$
D_{x'}u=u_{x'},\quad D_{x''}u=u_{x''},\quad
D^{2}_{x' }u =D_{x'x'}u=u_{x'x'},
$$
$$
D_{x'x''}u=u_{x'x''},\quad
D^{2}_{ x'' }u=D_{x''x''}u=u_{x''x''}.
$$
For $-\infty\leq S<T\leq \infty$, we set
$$
W_{p}^{1,2}((S,T)\times \bR^d)=
\{u:\,u,u_t,Du,D^2u\in L_{p}((S,T)\times \bR^d)\},
$$
$$
W_{p}^{2}(\bR^d)=
\{u:\,u,Du,D^2u\in L_{p}(\bR^d)\},
$$
$$
\bR_T=(-\infty,T), \quad \bR_T^{d+1}=\bR_T\times \bR^d.
$$
We also use the abbreviations
$$
C^{\infty}_{0}=C^{\infty}_{0}(\bR^{d+1}),\quad
L_{p}=L_{p}(\bR^{d+1}),\quad
W^{1,2}_p=W^{1,2}_p(\bR^{d+1}),...
$$
For real- or complex- or matrix-valued functions $A(t,x)$ on $\bR^{d+1}$ we
understand $\|A\|_{L_{p}}^{p}$ as
$$
\int_{\bR^{d+1}}|\mbox{trace}\,A\bar{A}^{*}|^{p/2}\,dx\,dt.
$$
Accordingly are introduced the norms in $W$ spaces.

Our first two results concern the
 $W^{1,2}_2$- and  $W^{ 2}_2$-solvability of equations
\eqref{parabolic} and \eqref{elliptic}
 with measurable leading coefficients independent of $x''$.
 It seems to the authors that even these results
are new  if $d\geq3$.
Set
$$
\tr a=a^{11}+a^{22}.
$$
\begin{theorem}
                                         \label{thm1}
  Let $T\in (-\infty,+\infty]$. Assume
 that $a^{jk}$ depend only on $(t,x')$
and there exists an increasing function $\omega(r)$,
$r\geq0$,
such that $\omega(0+)=0$ and
$$
|\tr a(t,x')-\tr a(t,y')|\leq\omega(|x'-y'|)
$$
for all $t,x',y'$.
Then

i) There are constants $N=N(d,\delta,K,\omega)$ and
$\lambda_0=\lambda_0(d,\delta,K,\omega)\geq0$ such that
for any $u\in W^{1,2}_2(\bR^{d+1}_T)$ and $\lambda\geq \lambda_0$
we have
$$
\lambda\|u\|_{L_{2}(\bR^{d+1}_T)}+\sqrt{\lambda}
\|Du\|_{L_{2}(\bR^{d+1}_T)}+\|D^{2}u\|_{L_{2}(\bR^{d+1}_T)}
+\|u_t\|_{L_{2}(\bR^{d+1}_T)}
$$
\begin{equation}
                                                \label{9.22.1}
\leq N\|Lu-\lambda u\|_{L_{2}(\bR^{d+1}_T)}.
\end{equation}

ii) For any $\lambda> \lambda_0$ and $f\in L_2(\bR^{d+1}_T)$,
there exists a unique
solution $u\in W^{1,2}_2(\bR^{d+1}_T)$ of
 equation \eqref{parabolic} in $\bR^{d+1}_T$.

iii) In case $b^j\equiv c\equiv 0$
and $\tr a$ depends only on $t$,
 we can take $\lambda_0=0$ in
i) and ii).
\end{theorem}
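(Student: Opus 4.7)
The plan is to first establish the a priori bound \eqref{9.22.1} for $u \in C_0^\infty(\bR^{d+1}_T)$ and then obtain existence by the method of continuity. A standard perturbation reduces to the case $b^j \equiv c \equiv 0$: writing $\|b^j D_j u + cu\|_{L_2} \le K(\|Du\|_{L_2} + \|u\|_{L_2})$ and absorbing into the $\sqrt\lambda\,\|Du\|_{L_2} + \lambda\|u\|_{L_2}$ terms on the LHS of \eqref{9.22.1}, which is legitimate once $\lambda$ is large relative to $K$ and only raises $\lambda_0$ by a constant depending on $K$.

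For $d\ge 3$ the central reduction is a partial Fourier transform $\hat u(t,x',\xi'')$ in the $x''$-variables. Because $a^{jk}$ depends only on $(t,x')$, for each $\xi''\in\bR^{d-2}$ the transform $\hat u$ satisfies a 2D parabolic equation in $(t,x')$ with parameter,
\begin{equation*}
-\hat u_t + a^{\alpha\beta}(t,x')D_{\alpha\beta}\hat u + B^\alpha(t,x',\xi'')D_\alpha \hat u + \bigl(C(t,x',\xi'') - \lambda\bigr)\hat u = \hat f,
\end{equation*}
with $\alpha,\beta \in \{1,2\}$, $|B^\alpha| \le 2K|\xi''|$, and $-C = \sum_{k,l\ge 3} a^{kl}\xi^k\xi^l \ge \delta|\xi''|^2$. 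Writing $\mu = \lambda + |\xi''|^2$, the target is the uniform-in-$\xi''$ 2D estimate
\begin{equation*}
\mu^2\|\hat u\|_{L_2}^2 + \mu\|D_{x'}\hat u\|_{L_2}^2 + \|D^2_{x'}\hat u\|_{L_2}^2 + \|\hat u_t\|_{L_2}^2 \le N\|\hat f\|_{L_2}^2
\end{equation*}
over $\bR_T\times\bR^2$, which by Plancherel in $\xi''$ yields \eqref{9.22.1}; the case $d=2$ corresponds to $\xi'' \equiv 0$.

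The 2D estimate rests on the classical identity $\int_{\bR^2}D_{11}v\,D_{22}v\,dx' = \int(D_{12}v)^2\,dx'$, which for compactly supported $v$ gives $\|\Delta v\|_{L_2(\bR^2)} = \|D^2 v\|_{L_2(\bR^2)}$ and the companion $\int[(D_{11}v - D_{22}v)^2 + 4(D_{12}v)^2]\,dx' = \int|D^2 v|^2\,dx'$. Writing $a^{\alpha\beta}D_{\alpha\beta}v = \alpha \Delta v + \mathcal{E}v$ with $\alpha = \tr a/2$, $\beta = (a^{11}-a^{22})/2$, and $\mathcal{E}v = \beta(D_{11}-D_{22})v + 2a^{12}D_{12}v$, a pointwise weighted Cauchy--Schwarz plus the ellipticity bound $\beta^2 + (a^{12})^2 \le \alpha^2 - \delta^2$ (which follows from $\det a \ge \delta^2$) yields the pointwise estimate $|\mathcal{E}v|^2 \le (\alpha^2 - \delta^2)[(D_{11}v - D_{22}v)^2 + 4(D_{12}v)^2]$. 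Combined with the parabolic identity
\[
\|{-v_t} + \alpha\Delta v - \mu_* v\|_{L_2}^2 = \|v_t\|_{L_2}^2 + \alpha^2\|\Delta v\|_{L_2}^2 + \mu_*^2\|v\|_{L_2}^2 + 2\mu_*\alpha\|\nabla v\|_{L_2}^2,
\]
valid for $\alpha$ and $\mu_*$ constant (verified by the cancellations $\int\partial_t v^2 = \int\partial_t|\nabla v|^2 = 0$), and a sharp absorption argument, this delivers the target estimate whenever $\alpha$ can be treated as constant. For case (iii), where $\tr a = \tr a(t)$, the 2D measurable-coefficient parabolic theory of \cite{Kr70} supplies the needed base estimate with no restriction on $\lambda$, yielding $\lambda_0 = 0$. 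For cases (i)--(ii), where $\tr a$ is uniformly continuous in $x'$ with modulus $\omega$, a partition-of-unity freezing argument in $x'$ at a scale $r$ with $\omega(r)$ small reduces matters to the frozen-$\alpha$ case; localization errors are absorbed by the coercive $\mu^2\|\hat u\|^2 + \mu\|D_{x'}\hat u\|^2$ terms on the LHS, forcing $\lambda \ge \lambda_0(\omega) > 0$.

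The main obstacle is carrying out the freezing and the uniform-in-$\xi''$ perturbation so that all constants depend only on $(d,\delta,K,\omega)$ and the sharp $\mu$-weights on all four LHS norms are preserved through the Fourier reduction. Once the a priori estimate is in hand, existence in $W^{1,2}_2(\bR^{d+1}_T)$ for $\lambda > \lambda_0$ follows by the method of continuity along $L_\tau = (1-\tau)(-\partial_t + \Delta) + \tau L$, since solvability at $\tau = 0$ is classical and the estimate is uniform in $\tau$ (same $\delta, K, \omega$). The finite-$T$ case is handled by extending $f$ by zero beyond $T$.
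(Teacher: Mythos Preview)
Your overall architecture---reduce to $b^{j}\equiv c\equiv0$, take a partial Fourier transform in $x''$, prove a $\xi''$-uniform $2$D estimate, then freeze $\tr a$ via a partition of unity in $x'$---matches the paper exactly. The genuine gap is in the $\xi''$-uniform $2$D estimate itself.

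After the Fourier transform the equation reads (with the $i$ that you dropped)
\[
-\tilde u_{t}+\sum_{\alpha,\beta=1}^{2}a^{\alpha\beta}(t,x')D_{\alpha\beta}\tilde u
+i\sum_{\alpha=1}^{2}B^{\alpha}(t,x',\xi'')D_{\alpha}\tilde u - C(t,x',\xi'')\tilde u=\tilde f,
\]
where $|B|\le N(\delta)|\xi''|$ and $\delta|\xi''|^{2}+\lambda\le C\le\delta^{-1}|\xi''|^{2}+\lambda$. Your plan is to apply the Campanato/\cite{Kr70} identity to the ``clean'' operator $-\partial_{t}+a^{\alpha\beta}D_{\alpha\beta}-\mu$ and then absorb the $B$ and $(C-\mu)$ terms. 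But $\|B\cdot D_{x'}\tilde u\|_{L_{2}}\le N(\delta)\,|\xi''|\,\|D_{x'}\tilde u\|_{L_{2}}$ and $\|(C-\mu)\tilde u\|_{L_{2}}\le N(\delta)\,|\xi''|^{2}\,\|\tilde u\|_{L_{2}}$ carry constants of order $\delta^{-1}$, while the constant $N_{0}$ in the clean $2$D estimate is itself of order $\delta^{-1}$ or worse. There is no smallness to exploit, so a ``sharp absorption'' of these terms into $\sqrt{\mu}\,\|D_{x'}\tilde u\|$ and $\mu\|\tilde u\|$ simply fails for small $\delta$. Note also that $\tilde u$ is complex and the drift enters with a factor $i$, so the real-variable identity you wrote does not apply to the full operator as written; the result of \cite{Kr70} is for real equations without first-order terms of size $|\xi''|$.

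The paper closes this gap not by an $L_{2}$ absorption but by a pointwise comparison (Lemma~\ref{lemma 9.8.1}): using the full $d\times d$ ellipticity one has $\tfrac12|\sigma^{-1}B|^{2}\le C-\lambda-\delta|\xi''|^{2}$, and either a Girsanov argument or a maximum-principle computation on $|\tilde u|$ then shows $|\tilde u|\le\hat u$, where $\hat u$ solves the $2$D equation with \emph{no} first-order term and zeroth-order coefficient $\lambda+\delta|\xi''|^{2}$. This gives the crucial bound $(\lambda+|\xi''|^{2})\|\tilde u\|_{L_{2}}\le N\|\tilde f\|_{L_{2}}$ directly, after which \cite{Kr70} and an $\varepsilon$-interpolation (now legitimate, since $|\xi''|^{4}\|\tilde u\|^{2}$ is already controlled) yield the remaining pieces. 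Without an analogue of this comparison step your argument does not close; the rest of your outline (freezing $\tr a$, method of continuity, extension from finite $T$) is fine and coincides with the paper.
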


Here is a similar result for elliptic equations.
\begin{theorem}
                                         \label{thm3}
Assume $a^{jk}=a^{jk}(x')$. Then

i) There are constants $N=N(d,\delta,K)$ and
$\lambda_0=\lambda_0(d,\delta,K)\geq0$ such that
for any $u\in W^{2}_2(\bR^{d})$ and $\lambda\geq \lambda_0$
we have
$$
\lambda\|u\|_{L_{2}(\bR^{d})}+
\sqrt{\lambda}
\|Du\|_{L_{2}(\bR^{d})}+
\|D^{2}u\|_{L_{2}(\bR^{d})}
\leq N\|Mu-\lambda u\|_{L_{2}(\bR^{d})}.
$$

ii) For any $\lambda> \lambda_0$ and $f\in L_2(\bR^{d})$, there exists a unique
solution $u\in W^{2}_2(\bR^{d})$ of   equation \eqref{elliptic}
in $\bR^{d}$.

iii) In case $b^j\equiv c\equiv 0$, we can take $\lambda_0=0$ in i) and
ii).
\end{theorem}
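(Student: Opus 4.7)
My plan is to deduce Theorem~\ref{thm3} from the parabolic Theorem~\ref{thm1} by realizing the elliptic equation as a steady-state parabolic equation.

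For \textbf{Part (i)}, given $u\in W^2_2(\bR^d)$ with $Mu-\lambda u=f$, I would choose a nonzero $\eta\in C^\infty_c(\bR)$ and set $v(t,x):=\eta(t)u(x)\in W^{1,2}_2(\bR^{d+1})$, which satisfies
\begin{equation*}
Lv-\lambda v = -\eta'(t)u(x)+\eta(t)f(x),
\end{equation*}
with coefficients $a^{jk}(x')$ trivially viewed as functions of $(t,x')$. Applying Theorem~\ref{thm1} to $v$ and using Fubini to factor each $L_2(\bR^{d+1})$ norm through the separate variables, the $\lambda$-, $\sqrt\lambda$-, and $D^2$-terms all acquire a common factor $\|\eta\|_{L_2(\bR)}$, while the right-hand side is bounded by $N\|\eta'\|_{L_2(\bR)}\|u\|_{L_2(\bR^d)}+N\|\eta\|_{L_2(\bR)}\|f\|_{L_2(\bR^d)}$. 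Rescaling $\eta(t)\mapsto\eta(t/R)$ makes $\|\eta'\|_{L_2}/\|\eta\|_{L_2}$ arbitrarily small, so for $\lambda$ above a fixed threshold the term $N\|\eta'\|_{L_2}\|u\|_{L_2(\bR^d)}$ is absorbed into $\lambda\|\eta\|_{L_2}\|u\|_{L_2(\bR^d)}$, and dividing through by $\|\eta\|_{L_2}$ yields the elliptic estimate.

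For \textbf{Part (ii)}, I would combine the a priori estimate with the method of continuity, connecting $M$ to $\delta\Delta$ along the homotopy $M_s:=sM+(1-s)\delta\Delta$, $s\in[0,1]$. Each $M_s$ has coefficients depending only on $x'$ and the same ellipticity constants, the endpoint operator $\delta\Delta-\lambda$ is uniquely solvable in $W^2_2(\bR^d)$ by Fourier transform, and Part~(i) supplies a uniform a priori bound along the homotopy. For \textbf{Part (iii)}, when $b^j\equiv c\equiv 0$, a scaling argument removes $\lambda_0$: given $u$ with $Mu-\lambda u=f$ and any $\lambda>0$, the rescaled function $u_r(x):=u(rx)$ satisfies an analogous equation with operator $\tilde M$ of coefficients $\tilde a^{jk}(x'):=a^{jk}(rx')$ (same ellipticity constants) and spectral parameter $r^2\lambda$; choosing $r$ so that $r^2\lambda\ge\lambda_0$ and applying Part~(i) to $u_r$, the powers of $r$ cancel upon unscaling and produce the full estimate for every $\lambda>0$. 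Letting $\lambda\to 0^+$ handles $\lambda=0$, and the method of continuity again supplies solvability for $\lambda>0$.

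The main technical obstacle is that Theorem~\ref{thm1} presumes uniform continuity of $\tr a$ in $x'$ whereas Theorem~\ref{thm3} only requires measurability of the $a^{jk}(x')$. In the time-independent elliptic setting this hypothesis ought to be superfluous, and the most natural fix is either to revisit the proof of Theorem~\ref{thm1} and verify that the continuity assumption drops out when $a^{jk}$ has no $t$-dependence, or to carry out the analogous argument directly on the elliptic side via partial Fourier transform in $x''$, reducing to a family of 2D elliptic problems in $x'$ for which classical Cordes-type $L_2$ estimates hold with no continuity hypothesis on the coefficients.
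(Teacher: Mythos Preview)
Your reduction from the elliptic to the parabolic problem via $v(t,x)=\eta(t)u(x)$, together with the method of continuity and the scaling for part (iii), is exactly the mechanism the paper uses---but only for the special case in which $\tr a$ is \emph{constant}. In that case Theorem~\ref{thm1}(iii) applies directly (time-independent coefficients with $\tr a$ depending only on $t$ force $\tr a$ constant), and your argument goes through.

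You have correctly located the real issue: Theorem~\ref{thm3} assumes only that $a^{jk}=a^{jk}(x')$ is measurable, so $\tr a(x')$ need not be continuous, and Theorem~\ref{thm1} is not directly available. However, your two proposed remedies are not what the paper does, and the first one does not work as stated. The continuity hypothesis on $\tr a$ in Theorem~\ref{thm1} is used precisely to freeze $\tr a$ locally and reduce to Theorem~\ref{theorem 9.8.1}, where $\tr a$ depends only on $t$; dropping the $t$-dependence of $a^{jk}$ does nothing to eliminate the $x'$-dependence of $\tr a$, so the freezing step is still needed and still requires continuity. Your second suggestion (redo the partial Fourier transform in $x''$ directly on the elliptic side) is viable in principle, but it amounts to reproducing the machinery of Lemma~\ref{lemma 9.8.1} and Theorem~\ref{theorem 9.8.1} in the elliptic setting, including the maximum-principle comparison needed to control the complex first-order terms that the Fourier transform produces; it is not a shortcut.

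The paper's device is different and shorter. One mollifies so that the coefficients are smooth, then divides the equation through by $\tr a$: setting $\tilde a^{jk}=a^{jk}/\tr a$, $\tilde b^{j}=b^{j}/\tr a$, $\tilde c=c/\tr a$, the new operator $\tilde M$ has $\tr\tilde a\equiv 1$, so the constant-trace case (and hence your reduction to Theorem~\ref{thm1}) now applies to $\tilde M$. The price is that $Mu-\lambda u=f$ becomes $\tilde M u-(\lambda/\tr a)u=f/\tr a$, with a variable zeroth-order coefficient. To handle this, one solves $\tilde M v-(\delta\lambda/2)v=-|f/\tr a|$ using the constant-trace estimate, observes by the maximum principle that $v\ge 0$ and $|u|\le v$ (since $\lambda/\tr a\ge \delta\lambda/2$), and hence $\lambda\|u\|_{L_2}\le\lambda\|v\|_{L_2}\le N\|f\|_{L_2}$. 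Writing $\tilde M u-(\delta\lambda/2)u=f/\tr a+(1/\tr a-\delta/2)\lambda u$ and applying the constant-trace estimate once more then yields the full a~priori bound. This normalization-plus-comparison trick is the missing ingredient in your proposal.
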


Theorem \ref{thm1} is proved in Section \ref{sec3}
and Theorem \ref{thm3}
is derived from it below in the present section.

\begin{remark}
						\label{rm1.3}
Theorem \ref{thm3} generalizes Theorem 2.5 of \cite{KK2}  and the main
result of \cite{Chi}, where the coefficients are independent of
$(x^{2},...,x^{d})$. From Theorem \ref{thm1} one can get Theorem 3.2
of \cite{KK1} where again the coefficients are independent of
$(x^{2},...,x^{d})$ but there is no restriction on
$\tr a$.   To show this we
introduce
  a new coordinate $y\in\bR$,   define
$$
\tilde L=L+(2\delta^{-1}-a^{11})D_{y}^2,
$$ and let $u(t,x,y)=u(t,x)\eta(y)$, where $\eta\in C_0^\infty(-2,2)$ is a nonnegative function and $\eta\equiv 1$ on $[-1,1]$. It is clear that
$$
\tilde L u(t,x,y)-\lambda u(t,x,y)=\tilde f,
$$
where
$$
\tilde f(t,x,y)=(Lu-\lambda u)(t,x)\eta(y)+(2\delta^{-1}-a^{11})u(t,x)\eta''(y).
$$
We now apply Theorem \ref{thm3} i) with $\tilde L$ and $u(t,x,y)$ in
place of $L$ and $u(t,x)$. With a sufficiently large $\lambda$, we will
arrive at \eqref{9.22.1} for function
$u(t,x)$.
 The remaining assertions of Theorem \ref{thm1}
in case the coefficients are independent of
$(x^{2},...,x^{d})$ are obtained as in its proof given
in Section \ref{sec3}. This argument also obviously applies
if $d=1$.
\end{remark}

\begin{remark}
                                    \label{rm10.17}
The conditions on $a^{jk}$ in Assertion i) and ii) of Theorem
\ref{thm3} can be relaxed. By using a partition of
unity and the method of freezing the coefficients, we can
 allow $a^{jk}$ to be measurable in $x'$ and uniformly
continuous in $x''$. In this case, the constants $\lambda_0$
and $N$ also depend on the modulus of continuity of $a^{jk}$
with respect to $x''$. Similarly, in Theorem \ref{thm1} we can
allow $a^{jk}$ to be measurable in
$(t,x')$, uniformly continuous in $x''$ and
$\tr a$ to be measurable in
$t$ and uniformly continuous in $x$.
\end{remark}

To state two more results we need some new notation.
If $B$ is a Borel subset of a hyperplane $\Gamma$ in a Euclidean space,
we denote by $|B|$ its volume relative to $\Gamma$. This notation is somewhat
ambiguous because $B$ also belongs to the ambient space,
where its volume can be zero. However, we hope that
from the context it will be clear relative to which hyperplane
we take the volume in each instance. If there is a measurable
function $f$ on $B$ which is integrable with respect to
Lebesgue measure $\ell$ on $\Gamma$ we set
$$
(f)_{B} =\dashint_{B}f( x)\,\ell(dx):=\frac{1}{|B|}
\int_{B}f( x)\,\ell(dx).
$$

 If $d\ge 3$, let
$$
B_r'(x') = \{ y' \in \bR^{2}: |x' -y' | < r\},
$$
$$
B_r''(x'') = \{ y'' \in \bR^{d-2}: |x''-y''| < r\},\quad B_{r}(x)=
B_r'(x')\times B_r''(x''),
$$
$$
Q_r(t,x) = (t-r^2,t) \times B_r(x),
$$
and let $\bQ$ be the collection of all $Q_r(t,x)$.
We call $r$ the radius of $Q=Q_r(t,x)$.
Set  $B_r'' = B_r''(0)$,
$B_r = B_r(0)$, $Q_r=Q_r(0,0)$.  If $d=2$, we denote $B_r(x)$ and
$Q_r(t,x)=(t-r^2,t) \times B_r(x)$ to be the usual balls and parabolic
cylinders. For a function $g$ defined on $\bR^{d+1}$, we denote its
(parabolic) maximal and sharp function, respectively, by
\begin{align*}
\bM g (t,x) &= \sup_{Q\in \bQ: (t,x) \in Q}
\dashint_{Q} | g(s,y) | \, dy \, ds,\\
g^{\#}(t,x) &= \sup_{Q\in \bQ: (t,x) \in Q}
\dashint_{Q} | g(s,y) - (g)_Q | \, dy \, ds.
\end{align*}

In the next theorem we require a quite mild regularity
assumption on $a^{jk}$. They are assumed to be measurable in
$t$ and $x'$, and almost VMO with respect to $x''$. More
precisely, we impose the following assumption
in which $\gamma>0$ will be specified later and
$R_{0}>0$ is a fixed number.

\begin{assumption}[$\gamma$]
                                            \label{assump2}
For any
$t,x,y$
 satisfying  $x''=y''$ and $|x'-y'|\leq R_{0}$ we have
\begin{equation}
                                                   \label{9.30.1}
 |\tr a(t,x)-
\tr a(t,y) | \le \gamma.
\end{equation}
Additionally if $d\geq3$,  for any
$Q=(s,t)\times B'\times B''\in\bQ$ with radius
$\rho\le R_0$
\begin{equation*}
\max_{j,k}\dashint_Q|a^{jk}(r,x)-\bar a^{jk}(r,x')|\,dx\,dr
\le \gamma
\end{equation*}
 where
$$
 \bar a^{jk}(r,x')=
\dashint_{B''} a^{jk}(r,x)\,dx''.
$$
\end{assumption}

\begin{theorem}
                                         \label{thm2}

One can find a
$\theta_0=\theta_0(\delta)>0$ such that for any
$p\in (2,2+\theta_0)$ there exists a $\gamma =\gamma(d,\delta,p) >0$
such that under Assumption
\ref{assump2} ($\gamma$) for any $T\in (-\infty,+\infty]$
 the following
holds.

i) For any $u\in W^{1,2}_p(\bR^{d+1}_T)$,
$$
\lambda\|u\|_{L_{p}(\bR^{d+1}_T)}+\sqrt{\lambda}
\|Du\|_{L_{p}(\bR^{d+1}_T)}+\|D^{2}u\|_{L_{p}(\bR^{d+1}_T)}
+\|u_t\|_{L_{p}(\bR^{d+1}_T)}
$$
\begin{equation}
                                                \label{13.10.21}
\leq N\|Lu-\lambda u\|_{L_{p}(\bR^{d+1}_T)},
\end{equation}
provided that $\lambda\geq \lambda_0$, where $\lambda_0
\geq0$ and $N$ depend only
 on $d,\delta,p,K$, and $R_0$.

ii) For any $\lambda> \lambda_0$ and $f\in L_p(\bR^{d+1}_T)$, there exists a unique solution
$u\in W^{1,2}_p(\bR^{d+1}_T)$ of equation \eqref{parabolic} in
$\bR^{d+1}_T$.

iii) In the case that
$a^{jk}=a^{jk}(t,x')$ and $b^j\equiv c\equiv 0$
 and $\tr a$   depends only on $t$,
we can
take
$\lambda_0=0$ in i) and ii).
\end{theorem}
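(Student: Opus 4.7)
The plan is to combine three ingredients mentioned in the introduction: (a) the sharp-function estimate for $D^{2}_{x''}u$ that Theorem \ref{thm4.1} of the next section will supply, (b) a two-dimensional parabolic $L_p$-result from \cite{Kr70} applied slice-wise in $x''$, and (c) the $W^{1,2}_{2}$-theory of Theorem \ref{thm1}, invoked for existence through the method of continuity.

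First I would fix $u \in C_{0}^{\infty}$ and $\lambda \geq \lambda_{0}$ and derive the a priori bound \eqref{13.10.21}. Theorem \ref{thm4.1} is expected to yield a pointwise sharp-function bound of the schematic form
$$
(D^{2}_{x''} u)^{\#}(t,x) \leq N \gamma^{\kappa} \bigl(\bM |D^{2} u|^{q}\bigr)^{1/q}(t,x) + N \bigl(\bM |Lu-\lambda u|^{q}\bigr)^{1/q}(t,x)
$$
for some $q \in (1,p)$, $\kappa > 0$, and $N$ independent of $\gamma$. The Fefferman--Stein sharp-function inequality, together with the Hardy--Littlewood maximal theorem on the parabolic cylinders $\bQ$, then delivers
$$
\|D^{2}_{x''} u\|_{L_p} \leq N \gamma^{\kappa} \|D^{2} u\|_{L_p} + N \|Lu - \lambda u\|_{L_p}.
$$

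Next I would control the remaining second-order derivatives by the \emph{freeze $x''$} trick. For each fixed $x''$, the function $v(t,x') := u(t,x',x'')$ solves on $(t,x') \in \bR^{3}$ the 2D parabolic equation
$$
-v_t + \sum_{j,k=1}^{2} a^{jk}(t,x',x'') D_{jk} v = g(t,x',x''),
$$
with $g := (Lu-\lambda u) + \lambda u - \sum_{(j,k)\notin\{1,2\}^{2}} a^{jk} D_{jk} u - b^{j}D_{j}u - cu$; the coefficients $a^{jk}$, $1 \leq j,k \leq 2$, are merely measurable in $(t,x')$. The result of \cite{Kr70} then provides, for every $p \in (2, 2+\theta_{0}(\delta))$, a slice-wise $L_p$ estimate for $v_{t}$ and $D^{2}_{x'}v$ in terms of $g$. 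Raising to the $p$-th power and integrating in $x''$ gives
$$
\|u_t\|_{L_p} + \|D^{2}_{x'} u\|_{L_p} \leq N\bigl(\|Lu-\lambda u\|_{L_p} + \|D^{2}_{x''}u\|_{L_p} + \|Du\|_{L_p} + \lambda\|u\|_{L_p}\bigr).
$$
Substituting the sharp-function bound on $\|D^{2}_{x''}u\|_{L_p}$, interpolating the cross-derivative by $\|D_{x'x''}u\|_{L_p} \leq \|D^{2}_{x'}u\|_{L_p}^{1/2}\|D^{2}_{x''}u\|_{L_p}^{1/2}$, choosing $\gamma$ so small that $N \gamma^{\kappa}\|D^{2}u\|_{L_p}$ can be absorbed on the left, and taking $\lambda_{0}$ large enough to swallow $\lambda\|u\|_{L_p}$ and $\sqrt{\lambda}\|Du\|_{L_p}$ yields \eqref{13.10.21}. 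Density then extends this to every $u \in W^{1,2}_{p}(\bR^{d+1}_{T})$.

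For part ii, existence follows by the method of continuity applied to the family $L_{\tau} := (1-\tau)(-\partial_{t}+\Delta) + \tau L$, $\tau \in [0,1]$: the a priori estimate from part i holds uniformly in $\tau$, the case $\tau = 0$ has classical $W^{1,2}_{p}$-solvability, and we bridge with Theorem \ref{thm1} by first solving in $W^{1,2}_{2}$ for compactly supported $L_p \cap L_2$ data and then using part i to upgrade the regularity to $W^{1,2}_p$. Part iii is identical, since Theorem \ref{thm1}(iii) yields $\lambda_{0} = 0$ precisely under the structural hypotheses stated there. The principal obstacle I anticipate is ensuring that the sharp-function estimate of Theorem \ref{thm4.1} carries a genuine smallness factor $\gamma^{\kappa}$ of the indicated form, rather than just a bound, so that the $D^{2}u$-term is absorbable; handling the cross-derivative $D_{x'x''}u$, which is directly controlled by neither the sharp-function step nor the 2D theorem, is the secondary technical hurdle and forces the interpolation shown above.
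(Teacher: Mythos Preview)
Your overall strategy matches the paper's, but step (b) has a genuine gap. The two-dimensional $L_p$ result from \cite{Kr70} (recorded here as Lemma~\ref{lem2.1}) does \emph{not} apply to arbitrary measurable $a^{jk}$, $1\le j,k\le 2$: it requires $\tr a=a^{11}+a^{22}$ to be a function of $t$ alone, so that the change of time $t\mapsto\tfrac12\int_0^t(\tr a)(s)\,ds$ reduces matters to $\tr a\equiv 2$. Under Assumption~\ref{assump2}~($\gamma$), condition~\eqref{9.30.1} only says that $\tr a$ has oscillation at most $\gamma$ in $x'$ on scale $R_0$; it is not independent of $x'$. This is precisely why the paper interposes Theorem~\ref{theorem 9.30.1}: one replaces $a^{jk}$ by $(\tr a(t,0,x'')/\tr a)\,a^{jk}$, whose $2$-trace depends only on $(t,x'')$, applies Corollary~\ref{corollary 2.2} (your slice-wise argument, now legitimate), and absorbs the perturbation error $N(\delta)\gamma\|D^2u\|_{L_p}$. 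This freezing step already forces $u$ to be supported in a ball of radius $R_0$, so a partition-of-unity argument is needed afterward; your outline omits both the freezing of $\tr a$ and the localization.

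Two smaller points. First, the sharp-function bound of Theorem~\ref{thm4.1} carries \emph{two} parameters, a dilation $\kappa$ and the VMO parameter $\gamma$: the term in $|u_{x''x''}|^2$ has factor $\kappa^{-2\alpha}$ (no $\gamma$), while the term in $|u_{xx}|^{2\tau}$ has factor $\kappa^{d+2}\gamma^{1/\sigma}$; one chooses $\kappa$ large first, then $\gamma$ small (see Lemma~\ref{lem4.2}). You flagged this as a concern, and it is real. Second, the paper does not thread $\lambda$ through steps (a)--(c): Lemma~\ref{lem4.2} is proved for $L_0u=f$ with $u$ supported in $Q_{R_0}$, and the $\lambda$-dependence, the lower-order terms $b^jD_ju+cu$, and the removal of the support restriction are all obtained afterward via a partition of unity together with Agmon's device. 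Your plan to ``swallow $\lambda\|u\|_{L_p}$'' after it has landed on the right-hand side of the slice-wise estimate goes the wrong way, since increasing $\lambda$ only enlarges that term.
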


 Theorem \ref{thm2} implies the solvability of the Cauchy
problem as in \cite{Krylov_2005}.  We prove Theorem \ref{thm2}
in Section \ref{sec4} and now we state one more result  for
elliptic equations in nondivergence form.

\begin{assumption}[$\gamma$]
                                            \label{assump3}
Either $d=2$ or $d\geq3$ and for any balls
$B'\subset\bR^{2},B''\subset\bR^{d-2}$
of the same radius $r\le R_0$,
\begin{equation*}
\sup_{j,k}\dashint_B|a^{jk}(x)-\bar a^{jk}(x')|\,dx\le \gamma,
\end{equation*}
where $B=B'\times B''$ and
$$
 \bar a^{jk}(x')=\dashint_{B''} a^{jk}(x )\,dx''.
$$
\end{assumption}

\begin{theorem}
                                         \label{thm4}
 Let $\theta_0$ be the constant in Theorem \ref{thm2}. Then for any $p\in
(2,2+\theta_0)$, there exists a $\gamma =\gamma(d,\delta,p) >0$ such that under
Assumption \ref{assump3} ($\gamma$) the following holds.

i) For any
$u\in W^{2}_p(\bR^{d})$,
$$
\lambda\|u\|_{L_{p}(\bR^{d})}+\sqrt{\lambda}
\|Du\|_{L_{p}(\bR^{d})}
+\|D^{2}u\|_{L_{p}(\bR^{d})}
\leq N\|Mu-\lambda u\|_{L_{p}(\bR^{d})},
$$
provided that $\lambda\geq \lambda_0$, where $\lambda_0 \geq0$ and $N$
depend only on $d,\delta,p,K$, and $R_0$
($R_{0}$ is excluded if $d=2$).

ii) For any $\lambda> \lambda_0$ and $f\in L_p(\bR^{d})$, there exists a unique
solution $u\in W^{2}_p(\bR^{d})$ of   equation \eqref{elliptic}
in $\bR^{d}$.

iii) In the case  that  $a^{jk}=a^{jk}(x')$ and $b^j\equiv c\equiv 0$, we
can take
$\lambda_0=0$ in i) and ii).
\end{theorem}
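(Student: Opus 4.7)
The plan is to deduce Theorem \ref{thm4} from the parabolic Theorem \ref{thm2} by regarding $u$ as a $t$-independent solution of the corresponding parabolic equation on $\bR^{d+1}$, in parallel with the derivation of Theorem \ref{thm3} from Theorem \ref{thm1} in this section. I would fix a nontrivial cutoff $\zeta\in C_0^\infty(\bR)$ and set $v(t,x)=u(x)\zeta(t)$; extending $a^{jk},b^j,c$ to be $t$-independent on $\bR^{d+1}$ and writing $L=-D_t+M$, the function $v$ belongs to $W^{1,2}_p(\bR^{d+1})$ and satisfies
$$
Lv-\lambda v=\zeta(t)f(x)-\zeta'(t)u(x).
$$
By Fubini, every term of the $W^{1,2}_p$-norm of $v$ factors as a product of an $L_p(\bR^d)$-norm of $u$ (or a derivative of $u$) with an $L_p(\bR)$-norm of $\zeta$ (or $\zeta'$); this will allow me, once Theorem \ref{thm2} is applicable, to divide out the $\zeta$-norm and absorb the $\zeta'u$ error into $\lambda\|u\|_{L_p}$ for $\lambda$ large enough.

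The main obstacle is verifying Assumption \ref{assump2}($\gamma$) for the $t$-extended coefficients. The VMO-type condition on parabolic cylinders follows immediately from Assumption \ref{assump3}($\gamma$) because the $t$-integration is trivial; but the oscillation condition on $\tr a$ in $x'$ is strictly stronger than anything Assumption \ref{assump3} supplies. I would handle this by the augmentation of Remark \ref{rm1.3}: introduce a further coordinate $y\in\bR$ and replace $L$ by $\widetilde L=L+(2\delta^{-1}-a^{11}(x))D_y^2$, working with $\tilde v(t,x,y)=v(t,x)\eta(y)$ for $\eta\in C_0^\infty(\bR)$. In the resulting problem of spatial dimension $d+1$ one takes the distinguished pair to be $(x^1,y)$, so that the new trace equals $a^{11}+(2\delta^{-1}-a^{11})=2\delta^{-1}$, a constant, and the oscillation condition is satisfied trivially. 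The price is that the modified VMO condition now averages over $(x^2,\ldots,x^d)$ rather than $(x^3,\ldots,x^d)$, and re-establishing it from Assumption \ref{assump3}($\gamma$)---perhaps after a dimension-dependent shrinking of $\gamma$---is the technical heart of the argument.

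Once Assumption \ref{assump2} holds for $\widetilde L$, Theorem \ref{thm2} yields an a priori estimate for $\tilde v$. Expanding
$$
\widetilde L\tilde v-\lambda\tilde v=\eta\zeta f-\eta\zeta'u+(2\delta^{-1}-a^{11})\zeta\eta''u,
$$
factoring the norms via Fubini, and dividing by $\|\zeta\|_{L_p(\bR)}\|\eta\|_{L_p(\bR)}$ produces
$$
\lambda\|u\|_{L_p(\bR^d)}+\sqrt\lambda\|Du\|_{L_p(\bR^d)}+\|D^2u\|_{L_p(\bR^d)}\le N\|f\|_{L_p(\bR^d)}+N_1\|u\|_{L_p(\bR^d)}
$$
with $N_1$ depending only on the fixed cutoffs, so taking $\lambda_0\ge 2N_1$ establishes part i). Existence in ii) follows from i) by the standard method of continuity between $\Delta-\lambda$ and $M-\lambda$. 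Part iii) is handled by invoking Theorem \ref{thm2} iii) in place of i) in the same reduction, which is legitimate because the augmented trace is not only constant in space but also trivially independent of $t$. A direct alternative, if the augmentation step proves intractable, is to mimic the proof of Theorem \ref{thm2} in the elliptic setting, using the $W^2_2$-solvability of Theorem \ref{thm3} and a 2D elliptic $W^2_p$ estimate in place of the 2D parabolic estimate from \cite{Kr70}.
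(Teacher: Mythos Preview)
You correctly identify the obstacle: Assumption~\ref{assump3}($\gamma$) says nothing about the oscillation of $\tr a$ in $x'$, so the parabolic reduction does not verify Assumption~\ref{assump2}($\gamma$) directly. However, your proposed fix via the augmentation of Remark~\ref{rm1.3} cannot work. After adding the coordinate $y$ and choosing $(x^{1},y)$ as the distinguished pair, the ``double prime'' variables become $(x^{2},x^{3},\ldots,x^{d})$, and Assumption~\ref{assump2} now demands that the coefficients be almost VMO in $x^{2}$. But Assumption~\ref{assump3}($\gamma$) allows $a^{jk}$ to be \emph{arbitrary} measurable functions of $x^{2}$; no shrinking of $\gamma$ will repair this. (The trick in Remark~\ref{rm1.3} succeeds only because there the coefficients were assumed independent of $(x^{2},\ldots,x^{d})$, so the shifted VMO condition was vacuous.) The same obstruction arises for any choice of distinguished pair that includes $y$: one of $x^{1},x^{2}$ is pushed into the VMO block.

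The paper's route is different. It first treats the special case where $\tr a$ is constant by the steady-state reduction you describe. For general $\tr a$, instead of augmenting, it \emph{divides}: set $\tilde a^{jk}=a^{jk}/\tr a$, $\tilde b^{j}=b^{j}/\tr a$, $\tilde c=c/\tr a$, so that $\tr\tilde a\equiv 1$ while the distinguished pair remains $(x^{1},x^{2})$ and Assumption~\ref{assump3} is preserved up to a factor $N(\delta)$. The price is that the equation becomes $\tilde Mu-(\lambda/\tr a)u=f/\tr a$, with a variable zeroth-order coefficient. This is handled not by absorption but by a maximum-principle comparison: one solves $\tilde Mv-(\delta\lambda/2)v=-|f/\tr a|$ via the constant-trace case, observes $|u|\le v$ pointwise, and hence $\lambda\|u\|_{L_{p}}\le\lambda\|v\|_{L_{p}}\le N\|f\|_{L_{p}}$. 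Feeding this back into $\tilde Mu-(\delta\lambda/2)u=f/\tr a+(1/\tr a-\delta/2)\lambda u$ gives the full estimate. Your fallback of redoing the entire $L_{p}$ theory in the elliptic setting would work in principle but is far heavier than this two-line reduction.
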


\begin{proof}[Proof of Theorem \ref{thm3} and \ref{thm4}] First
we assume that $\tr a$ is a constant. In this case, Theorem
\ref{thm3} and \ref{thm4} follow from Theorem \ref{thm1} and
\ref{thm2} respectively by using the idea that solutions to
elliptic equations can be viewed as steady state solutions to
parabolic equations. We omit the details and refer the reader
to the proof of Theorem 2.6
\cite{Krylov_2005}.

We now concentrate on proving Theorem \ref{thm4}
in the general case.  Owing to
 mollifications, a density
argument, and the method of continuity  it suffices to prove assertion i)
assuming that
 the coefficients are smooth and $u\in
C_0^\infty(\bR^{d})$
so that $f\in C_0^\infty (\bR^{d})$.  Note that the standard
mollification preserves Assumption \ref{assump3} ($\gamma$), the
ellipticity constant $\delta$, and the bounds $K$.  We introduce
$$
\tilde a^{jk}=\frac {a^{jk}} {\tr a},\quad \tilde b^{j}=\frac
{b^{j}} {\tr a},\quad \tilde c=\frac c {\tr a},
$$
and let $\tilde M$ be the elliptic operator constructed from
them. It is easy to see that $\tr \tilde a\equiv 1$, the new
coefficients satisfy the same boundedness and ellipticity
conditions with possibly different ellipticity constant and
bounds:
$\tilde \delta$ and $\tilde K$. Moreover, if $a^{jk}$ satisfy
Assumption \ref{assump3} ($\gamma$), then $\tilde a^{jk}$
satisfy Assumption
\ref{assump3} ($N(\delta)\gamma$).
Therefore, one can find a $\gamma>0$.
depending only on $d,\delta,p$, such that Assumption \ref{assump3}
($\gamma$) implies that for $\tilde
a^{jk}$
Assumption \ref{assump3} ($\gamma$)  is satisfied with
$\gamma=\gamma(d,\tilde{\delta},p)$ taken from Theorem \ref{thm2}.
Let $\tilde\lambda_0$ be the
constant from Theorem \ref{thm2}
corresponding to $\tilde \delta$ and $\tilde K$. Clearly
\eqref{elliptic} is equivalent to
$$
\tilde M u-{\lambda}u/ {\tr a} =f/ {\tr a}.
$$
For any $\lambda>2\delta^{-1}\tilde \lambda_0$, by the first
part of  the
proof there exists a unique $v\in W^2_p$ solving
$$
\tilde M v-\delta \lambda v/2=-| f/ {\tr a}|.
$$
Moreover, $v$ is a bounded classical solution since the
coefficients of $\tilde M$ are smooth and $| f /\tr a|$
is Lipschitz continuous. Due to the maximum principle $v\ge 0$
and $|u|\le v$ in $\bR^d$. Again, by the first part of the
proof, for appropriate $p$ and $N$ we have
\begin{equation}
 			\label{eq4.15}
\lambda \|u\|_{L_p (\bR^{d})}
\le \lambda \|v\|_{L_p (\bR^{d})}
\le N \|f\|_{L_p (\bR^{d})}.
\end{equation}
Since
$$
\tilde M u-\delta \lambda u/2= f/ {\tr a}+(1/\tr
a-\delta/2)\lambda u,
$$
we then obtain the desired estimate from the first
part of the proof and \eqref{eq4.15}.
This proves Theorem \ref{thm4}.

To prove Theorem \ref{thm3} it suffices to repeat the
above argument taking $p=2$ and dropping mentioning
Assumption \ref{assump3} ($\gamma$).
\end{proof}

An application of Theorem \ref{thm4} is the $W^2_p$-solvability of the Dirichlet
problem for the equation
$$
a^{jk}(x^1)D_{jk} u=f
$$
in $\{|x|<1\}$. Here
we assume that
$a^{jk}(x^1)$ are measurable in $x^1$ and continuous near $-1$ and $1$. This
equation can be solved by following the steps in Chapter 11 of
\cite{Kr08}. Notice that when locally flattening the boundary and using
odd/even extensions, one gets an equation with leading coefficient
measurable in two coordinates and continuous in the others.

\begin{remark}
                                                \label{remark 9.23.1}
The author of \cite{Ki1}-\cite{Ki3} presents quite general results
on the solvability of parabolic equations in Sobolev spaces
with or without mixed norms. Roughly speaking,
the main case in \cite{Ki1}-\cite{Ki3}
is when $a^{11}$ is measurable in   $x^{1}$ (or $t$) and VMO in
$(t,x^{2},...,x^{d})$ (or $x$)  and
$p$ is any number in $(2,\infty)$ without any restriction
on $\tr a$.
Theorem \ref{thm2} and the discussion in Remark  \ref{rm1.3} show that,
restricted to Sobolev spaces without mixed norms, some of D.~Kim's
results admit generalizations allowing $a^{11}$ which are measurable in
$(t,x^{1})$ and VMO in $(x^{2},...,x^{d})$ provided that
$p>2$ is close enough to 2. We have no idea what happens in this
situation if $p>2$ is arbitrary even if $d=1$.

In the case
of Theorem \ref{thm4} an example by Ural'tseva (see \cite{LU}) shows that
for any $d\geq 2$ its assertion becomes false for any fixed $p>2$ if
$\delta$ is sufficiently small.

\end{remark}

\mysection{Preliminary results}
                                            \label{sec2}
We first consider equations in $\bR\times \bR^2$ with measurable
coefficients.
\begin{lemma}
                                \label{lem2.1}
Let $T\in (-\infty,\infty]$, $d=2$ and
$$
Lu=-u_t+\sum_{j,k=1}^2 a^{jk}(t,x)D_{jk}u,
$$
  where $\tr a$ depends only on $t$. Then there exists
 a  $\theta_0=\theta_0(\delta)>0$ such that for
any
$p\in (2-\theta_0,2+\theta_0)$, $u\in W^{1,2}_p(\bR^3_T)$, and
$\lambda\ge 0$, we have
$$
\|D^{2}u\|_{L_{p}(\bR^3_T)}+\|u_t\|_{L_{p}(\bR^3_T)}
+\sqrt{\lambda}
\|Du\|_{L_{p}(\bR^3_T)}
$$
\begin{equation}
                                            \label{2.52}
+\lambda\|u\|_{L_{p}(\bR^3_T)}
\leq N\|Lu-\lambda u\|_{L_{p}(\bR^3_T)},
\end{equation}
where $N=N( \delta,p)$.
Moreover for $\lambda>0$ and $f\in L_p(\bR^3_T)$ there exists a unique $u\in W^{1,2}_p(\bR^3_T)$ solving $Lu-\lambda u=f$  in $\bR^3_T$.
\end{lemma}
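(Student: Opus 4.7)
The plan is to view $L$ as a perturbation of the standard heat operator. After a time change and a pointwise rescaling, at $p=2$ a Plancherel-plus-integration-by-parts argument exploiting the traceless-symmetric structure of the perturbation yields a strict contraction; this is propagated to $p$ near $2$ by Calderón--Zygmund theory and Riesz--Thorin interpolation.

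\textbf{Normalization and $p=2$ estimate.} Introduce $\tau = \tfrac12\int_0^t \tr a(r)\,dr$, which is bilipschitz since $\delta \le \tfrac12\tr a(t) \le \delta^{-1}$, and divide through by $\tfrac12\tr a(t)$. In the $(\tau,x)$ variables the equation becomes
\[
-u_\tau + \Delta u + B^{jk}(\tau,x)D_{jk}u - \tilde\lambda(\tau)u = \tilde f,
\]
where $B := (2a - \tr a\,I)/\tr a$ is symmetric traceless, $\tilde\lambda(\tau) \in [\delta\lambda, \delta^{-1}\lambda]$, and $\tilde f = 2f/\tr a$; uniform ellipticity of $a$ forces $\|B(\tau,x)\|_{\mathrm{op}} \le 1-\delta^2$ pointwise, and $L_p$ norms change by $\delta$-dependent factors. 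Set $w := -u_\tau + \Delta u - \tilde\lambda(\tau)u$. Taking the Fourier transform in $x$ and running the standard energy estimate on the resulting first-order ODE in $\tau$ (using $\tilde\lambda \ge 0$) yields $\|D^2u\|_{L_2}, \|u_\tau\|_{L_2}, \sqrt\lambda\|Du\|_{L_2}, \lambda\|u\|_{L_2} \le C(\delta)\|w\|_{L_2}$. The 2D integration-by-parts identity
\[
\int_{\bR^3_T}\bigl[(D_{11}u - D_{22}u)^2 + 4(D_{12}u)^2\bigr]\,dx\,d\tau = \|D^2 u\|_{L_2(\bR^3_T)}^2,
\]
combined with the pointwise bound $|B^{jk}D_{jk}u|^2 \le \|B\|_{\mathrm{op}}^2[(D_{11}u-D_{22}u)^2 + 4(D_{12}u)^2]$ (immediate from $B^{jk}D_{jk}u = \alpha(D_{11}u-D_{22}u) + 2\beta D_{12}u$), gives $\|B^{jk}D_{jk}u\|_{L_2} \le (1-\delta^2)\|w\|_{L_2}$. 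Rewriting the equation as $w + B^{jk}D_{jk}u = \tilde f$ and absorbing yields $\|w\|_{L_2} \le \delta^{-2}\|\tilde f\|_{L_2}$, whence \eqref{2.52} at $p=2$.

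\textbf{Extension to $p$ near $2$.} Define the operator $S$ on $L_p(\bR^3_T)$ by $Sw := B^{jk}(\tau,x)D_{jk}u$, where $u$ is the unique $W^{1,2}_p$ solution of $-u_\tau + \Delta u - \tilde\lambda(\tau)u = w$. The second-derivative resolvent $D_{jk}(\partial_\tau - \Delta + \mu)^{-1}$ is a classical parabolic Calderón--Zygmund/Fourier-multiplier operator, bounded on $L_p(\bR^3)$ for every $p \in (1,\infty)$ uniformly in $\mu \ge 0$ by parabolic scaling; hence $\|S\|_{L_p\to L_p} \le N(\delta,p)$. The previous paragraph gives $\|S\|_{L_2\to L_2} \le 1-\delta^2 < 1$. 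Riesz--Thorin interpolation between $p=2$ and a fixed $q>2$ (and, by duality, $q'<2$) then produces $\theta_0 = \theta_0(\delta) > 0$ such that $\|S\|_{L_p\to L_p} < 1$ for every $p \in (2-\theta_0, 2+\theta_0)$. The equation being $(I + S)w = \tilde f$, Neumann-series inversion of $I+S$ on $L_p$ yields $w$ and thence $u \in W^{1,2}_p$, giving both solvability and the $W^{1,2}_p$ estimate \eqref{2.52}.

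\textbf{Main obstacle.} The crux is the strict $L_2$ contraction $\|S\|_{L_2} \le 1-\delta^2 < 1$, which rests on the 2D identity transforming the symmetric-traceless quadratic form in $D^2u$ back into $\|D^2u\|_{L_2}^2$. This is precisely where the hypothesis $\tr a = \tr a(t)$ is essential: it permits the clean decomposition $\tilde a = I + B$ after normalization, with contraction constant depending only on $\delta$. Everything that follows—Calderón--Zygmund boundedness on $L_p$ and interpolation—is soft and yields $\theta_0$ depending only on $\delta$.
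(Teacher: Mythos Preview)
Your approach is correct and genuinely different from the paper's. After the same time-change reduction to $\tr a \equiv 2$, the paper simply invokes Theorem~3 of \cite{Kr70} for the $L_p$ estimate on a unit parabolic cylinder and then scales. Your argument is instead self-contained: you exploit the two-dimensional algebra that the normalized matrix is $I + B$ with $B$ symmetric traceless and $\|B\|_{\mathrm{op}} < 1$, together with the integration-by-parts identity equating $\int[(D_{11}u-D_{22}u)^2 + 4(D_{12}u)^2]$ and $\|D^2u\|_{L_2}^2$, to obtain a strict $L_2$ contraction for $S$; Riesz--Thorin then propagates this to a neighborhood of $p=2$. What your route buys is transparency and an explicit $\theta_0(\delta)$ without the black-box citation; what the paper's route buys is brevity, since the cited result delivers the full $L_p$ estimate at once. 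One point glosses over a small gap: the auxiliary equation defining $S$ carries the \emph{variable} zeroth-order coefficient $\tilde\lambda(\tau)$, so the uniform-in-$\mu$ bound for the constant-coefficient resolvent $D_{jk}(\partial_\tau - \Delta + \mu)^{-1}$ does not literally yield $\|S\|_{L_p\to L_p}\le N$. The repair is short: from the Duhamel representation and $e^{-\int_s^\tau\tilde\lambda}\le e^{-\delta\lambda(\tau-s)}$ one gets the pointwise domination $|u|\le v$, where $-v_\tau+\Delta v-\delta\lambda v=|w|$, whence $\lambda\|u\|_{L_p}\le \delta^{-1}\|w\|_{L_p}$; writing $-u_\tau + \Delta u = w + \tilde\lambda(\tau)u$ and applying the pure heat estimate then gives $\|D^2u\|_{L_p}\le N(\delta,p)\|w\|_{L_p}$ as required.
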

\begin{proof}
First we consider the case that
 $T=\infty$. The change of variable
$$
t\to \frac 1 2\int_0^t  (\tr a)(s)\,ds
$$
 together with the argument in the proof of Theorem \ref{thm3}
and \ref{thm4} reduce  the problem to the case
when
$a^{11}+a^{22}=2$. Moreover, by a density argument to prove
\eqref{2.52} it suffices to consider
$u\in C_0^\infty$.
In case $u\in C_0^\infty(\Gamma)$  with
$\Gamma=(0,1)\times\{|x|<1\}$, it follows from Theorem 3 of
\cite{Kr70} that
$$
\lambda\|u\|_{L_{p}} +
\|u_t\|_{L_{p}}+\|D^{2}u\|_{L_{p}}
\leq N\|Lu -\lambda u\|_{L_{p}}.
$$
(See also \cite{Ca} for a result for elliptic equations.)
For general $u\in C_0^\infty$, we can use
shifting and scaling, the fact that the above $N$ depends only
on $\delta$ and $p$, and interpolation inequalities to treat
$Du$.
This proves \eqref{2.52}
if $T=\infty$. Adding using the standard
method of continuity completes the proof of
  the lemma when $T=\infty$.

For general $T$, we use the fact that $u=w$ for $t<T$, where $w\in
W_p^{1,2}$ solves
$$
L w-\lambda w=\chi_{t<T}(L u-\lambda u).
$$ The lemma is proved.
\end{proof}

An immediate corollary of Lemma \ref{lem2.1} is the following estimate.

\begin{corollary}
                                            \label{corollary 2.2}
Let $T\in (-\infty,\infty]$, $d\ge 3$,
$$
Lu=-u_t+\sum_{j,k=1}^d a^{jk}( t, x)D_{jk}u,
$$
where $\tr a $ depends only on $(t,x'')$. Then for any
$p\in (2-\theta_0,2+\theta_0)$, where $\theta_0$ is taken from
 Lemma \ref{lem2.1}, and any $u\in W^{1,2}_p(\bR^{d+1}_T)$ and $\lambda\ge
0$, we have
$$
\lambda\|u\|_{L_{p}(\bR^{d+1}_T)}+\sqrt{\lambda}
\|Du\|_{L_{p}(\bR^{d+1}_T)}+\|D^{2}u\|_{L_{p}(\bR^{d+1}_T)}
+\|u_t\|_{L_{p}(\bR^{d+1}_T)}
$$
\begin{equation}
                                            \label{2.57}
\leq N\|Lu-\lambda u\|_{L_{p}(\bR^{d+1}_T)}
+N\|D^{2}_{x''}u\|_{L_{p}(\bR^{d+1}_T)},
\end{equation}
where $N=N(\delta,d,p)$.
\end{corollary}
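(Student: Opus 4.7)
The strategy is to freeze the transverse variable $x''$ and reduce to the 2D slice setting of Lemma~\ref{lem2.1}. For each $x'' \in \bR^{d-2}$, the 2D operator
$$
L_{0}^{(x'')} u := -u_{t} + \sum_{j,k=1}^{2} a^{jk}(t, x', x'')\, D_{jk} u
$$
has coefficients measurable in $(t, x')$, and its trace coefficient $\tr a(t, x'')$ is a function of $t$ alone once $x''$ is fixed, so the hypothesis of Lemma~\ref{lem2.1} is met. Writing $L u = L_{0}^{(x'')} u + R$, where $R$ collects the terms involving at least one $x''$-derivative and satisfies $|R| \leq N(|D_{x'} D_{x''} u| + |D^{2}_{x''} u|)$, I apply Lemma~\ref{lem2.1} to $u(\cdot,\cdot,x'')$ viewed as a function on $\bR^{3}_{T}$, raise the slice estimate to the $p$-th power, integrate over $x''$, and take the $p$-th root (Fubini) to obtain
\begin{equation*}
\lambda \|u\|_{L_{p}} + \sqrt{\lambda}\, \|D_{x'} u\|_{L_{p}} + \|D^{2}_{x'} u\|_{L_{p}} + \|u_{t}\|_{L_{p}} \leq N\|Lu - \lambda u\|_{L_{p}} + N\|D_{x'} D_{x''} u\|_{L_{p}} + N\|D^{2}_{x''} u\|_{L_{p}}. \tag{$\star$}
\end{equation*}

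The crux is to eliminate $\|D_{x'} D_{x''} u\|_{L_{p}}$ from the right side of $(\star)$. I would invoke the mixed-derivative interpolation
$$
\|D_{x'} D_{x''} u\|_{L_{p}} \leq \varepsilon\, \|D^{2}_{x'} u\|_{L_{p}} + C_{\varepsilon}\, \|D^{2}_{x''} u\|_{L_{p}}, \qquad \varepsilon > 0.
$$
For $p = 2$ this follows at once from integration by parts: $\int |D_{j} D_{k} u|^{2}\,dx = \int D_{jj} u\, D_{kk} u\,dx$ for $j \in \{1,2\}$ and $k \geq 3$, followed by Cauchy--Schwarz and Young. For $p \in (2 - \theta_{0}, 2 + \theta_{0}) \setminus \{2\}$, it can be derived via Stein's complex interpolation applied to the analytic family $\{(-\Delta_{x'})^{1-z}(-\Delta_{x''})^{z}\}$ of fractional Laplacians, combined with the partial Riesz transform identity $D_{j} D_{k} = R'_{j} R''_{k}\, \sqrt{-\Delta_{x'}}\, \sqrt{-\Delta_{x''}}$, where $R'$ and $R''$ are bounded on $L_{p}$. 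Choosing $\varepsilon$ small enough that $N\varepsilon < 1/2$ and substituting into $(\star)$, the term $N\varepsilon\, \|D^{2}_{x'} u\|_{L_{p}}$ is absorbed on the left.

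To reach the full left-hand side of \eqref{2.57}, I restore $\|D^{2}_{x''} u\|_{L_{p}}$ (trivially bounded by the right side), $\|D_{x'} D_{x''} u\|_{L_{p}}$ (bounded by the interpolation above taken with $\varepsilon = 1$), and $\sqrt{\lambda}\, \|D_{x''} u\|_{L_{p}}$ (bounded by $N(\lambda \|u\|_{L_{p}} + \|D^{2}_{x''} u\|_{L_{p}})$, via the elementary 1D interpolation $\|D_{k} v\|_{L_{p}}^{2} \leq N\|v\|_{L_{p}}\|D_{kk} v\|_{L_{p}}$ applied for each $k \geq 3$ and Young's inequality). I expect the main obstacle to be the mixed-derivative interpolation inequality for $p \neq 2$, since the $p = 2$ integration-by-parts argument is special to Hilbert space; the general case requires nontrivial Calder\'on--Zygmund / complex interpolation machinery.
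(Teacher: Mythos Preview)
Your overall strategy---freeze $x''$, apply Lemma~\ref{lem2.1} slice by slice, integrate in $x''$, then use a mixed-derivative interpolation to absorb $\|D_{x'}D_{x''}u\|_{L_p}$---is exactly the paper's route, and your proposal is correct.

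The only place you diverge is in justifying the interpolation
\[
\|D_{x'}D_{x''}u\|_{L_p}\le \varepsilon\,\|D^2_{x'}u\|_{L_p}+C_\varepsilon\,\|D^2_{x''}u\|_{L_p},
\]
which you flag as the main obstacle and propose to attack via Stein complex interpolation on $(-\Delta_{x'})^{1-z}(-\Delta_{x''})^{z}$ together with partial Riesz transforms. That works, but it is far heavier than needed. The paper obtains this inequality in two elementary steps: first, by the standard Calder\'on--Zygmund estimate for the Laplacian,
\[
\|D_{x'x''}u\|_{L_p}\le N\|\Delta u\|_{L_p}\le N\|D_{x'x'}u\|_{L_p}+N\|D_{x''x''}u\|_{L_p};
\]
second, by applying this to $u(\mu x',x'')$ and tracking the scaling in $\mu$, which converts the constant in front of $\|D_{x'x'}u\|_{L_p}$ into an arbitrary $\varepsilon>0$ at the cost of $N\varepsilon^{-1}$ in front of $\|D_{x''x''}u\|_{L_p}$. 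So the ``nontrivial Calder\'on--Zygmund / complex interpolation machinery'' you anticipate collapses to one use of the $L_p$ boundedness of Riesz transforms plus a one-line scaling argument.
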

\begin{proof}
We first fix $x''$ and apply Lemma \ref{lem2.1} to get
$$
\lambda\|u(\cdot,\cdot,x'')\|^p_{L_{p}(\bR^3_T)}
+\|D_{x'}^{2}u(\cdot,\cdot,x'')\|^p_{L_{p}(\bR^3_T)}
+\|u_t(\cdot,\cdot,x'')\|^p_{L_{p}(\bR^3_T)}
$$
\begin{equation}
                                            \label{3.50}
\leq N\|  \sum_{j,k=1}^2 a^{jk}   D_{jk}u(\cdot,\cdot,x'')
-u_t (\cdot,\cdot,x'') -\lambda
u(\cdot,\cdot,x'')\|^p_{L_{p}(\bR^3_T)}.
\end{equation}
Upon integrating \eqref{3.50} with respect to $x''$ we arrive at
$$
\lambda\|u\|_{L_{p}(\bR^{d+1}_T)}+\|D_{x'}^{2}u\|_{L_{p}(\bR^{d+1}_T)}
+\|u_t\|_{L_{p}(\bR^{d+1}_T)}
$$
\begin{equation}
                                            \label{3.55}
\leq N\|Lu-\lambda u\|_{L_{p}(\bR^{d+1}_T)}
+\|D_{xx''}u\|_{L_{p}(\bR^{d+1}_T)}.
\end{equation}

 Observe that for any $\varepsilon>0$
\begin{equation}
                                                    \label{3.57}
\|D_{x'x''}u\|_{L_{p}}\le \varepsilon\|D_{x'x'}u\|_{L_{p}}
+N(d,p)\varepsilon^{-1}\|D_{x''x''}u\|_{L_{p}},
\end{equation}
 which is deduced from
$$
\|D_{x'x''}u\|_{L_{p}}\le N
\|\Delta u\|_{L_{p}}\leq N\|D_{x'x'}u\|_{L_{p}}+N \|D_{x''x''}u\|_{L_{p}}
$$
by scaling in $x'$. By using \eqref{3.57}, we  get from
\eqref{3.55}
$$
\lambda\|u\|_{L_{p}(\bR^{d+1}_T)}
+\|D_{x'x'}u\|_{L_{p}(\bR^{d+1}_T)}
+\|u_t\|_{L_{p}(\bR^{d+1}_T)}
$$
\begin{equation*}
\leq N\|Lu-\lambda u\|_{L_{p}(\bR^{d+1}_T)}
+\|D_{x''x''}u\|_{L_{p}(\bR^{d+1}_T)}.
\end{equation*}
To estimate $\|D_{x'x''}u\|_{L_{p}(\bR^{d+1}_T)}$ and $\|Du\|_{L_{p}(\bR^{d+1}_T)}$, we use \eqref{3.57} again and the interpolation inequality
$$
\sqrt \lambda \|Du\|_{L_{p}}\le N \lambda \|u\|_{L_{p}}+N\|D^2u\|_{L_{p}}.
$$
The corollary is proved.
\end{proof}

In the following theorem as in Corollary
\ref{corollary 2.2} the constant $\theta_{0}$ is taken
from  Lemma \ref{lem2.1}.

\begin{theorem}
                                              \label{theorem 9.30.1}
In case $d\geq3$ and $T\in(-\infty,\infty]$
for any $p\in (2-\theta_0,2+\theta_0)$ there exists a
$\gamma(d,p,\delta)>0$ such that, if
for any $t,x,y$,
satisfying $x''=y''$ and $|x'-y'|\leq R_{0}$,
condition \eqref{9.30.1} holds, then
estimate \eqref{2.57} is valid for any
$u\in W^{1,2}_p(\bR^{d+1}_T)$ and $\lambda\geq\lambda_{0}$
with general $L$ as in \eqref{9.30.3}
and $N$ and $\lambda_{0}\geq0$ depending only on $d,p,\delta$,
and $R_{0}$.  Furthermore, if $u(t,x)=0$
for $|x|\geq R_{0}$
  and $b^j=c=0$, then we can take $\lambda_0=0$ and $N$
to be independent of $R_0$.
\end{theorem}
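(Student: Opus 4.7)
The plan is to perturb off of Corollary \ref{corollary 2.2} using a partition of unity in $x'$ at scale $R_0$, combined with freezing the trace $\tr a$ in the $x'$ variables on each piece. The assumption \eqref{9.30.1} guarantees that on each piece the trace is within $\gamma$ of a function depending only on $(t,x'')$, which is precisely the hypothesis needed to invoke Corollary \ref{corollary 2.2}.

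Concretely, I would choose a smooth partition of unity $\{\zeta_n(x')\}_{n\geq 1}$ on $\bR^{2}$ with $\sum_n \zeta_n \equiv 1$, $\operatorname{supp}\zeta_n \subset B_{R_0}'(x_n')$, bounded overlap, and $|D^k \zeta_n| \leq N R_0^{-k}$. For each $n$, define a perturbed operator $L_n$ by replacing only the entry $a^{11}$ by
$$
a_n^{11}(t,x) = a^{11}(t,x) + \tr a(t,x_n',x'') - \tr a(t,x),
$$
leaving the other entries untouched. Then $\tr a_n(t,x) = \tr a(t,x_n',x'')$ depends only on $(t,x'')$. On $\operatorname{supp}\zeta_n$, assumption \eqref{9.30.1} gives $|a_n^{11}-a^{11}|\leq \gamma$, so for $\gamma=\gamma(\delta)$ small the perturbed matrix remains symmetric and uniformly elliptic with a constant depending only on $\delta$.

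Next, apply Corollary \ref{corollary 2.2} with the operator $L_n$ (and its lower order part inherited from $L$) to the function $u_n := u\zeta_n$. Writing
$$
L_n u_n - \lambda u_n = \zeta_n (Lu-\lambda u) + (L_n-L)u_n + [L,\zeta_n] u,
$$
the first term has $L_p$-norm controlled by $\|\zeta_n(Lu-\lambda u)\|_{L_p}$; the second is $(a_n^{11}-a^{11}) D_{11}u_n$, bounded pointwise by $\gamma |D^2 u_n|$ on $\operatorname{supp}\zeta_n$; and the commutator $[L,\zeta_n]u$ produces only lower-order terms of the form $R_0^{-1}|Du|+R_0^{-2}|u|$, supported where $\zeta_n$ varies. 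Raising to the $p$-th power, summing on $n$ using bounded overlap, and using standard estimates of the form
$$
\sum_n \|D^2(u\zeta_n)\|_{L_p}^{p} \leq N\|D^2 u\|_{L_p}^{p} + N R_0^{-p}\|Du\|_{L_p}^{p} + N R_0^{-2p}\|u\|_{L_p}^{p},
$$
(and similarly for $D^2_{x''}$ and $u_t$), yields an estimate of the form \eqref{2.57} with an extra $N\gamma\|D^2 u\|_{L_p}$ and lower-order terms $N(R_0)\bigl(\|Du\|_{L_p}+\|u\|_{L_p}\bigr)$ on the right. Choosing $\gamma=\gamma(d,p,\delta)$ small enough absorbs the former into the left side, while taking $\lambda_0=\lambda_0(d,p,\delta,R_0)$ large enough absorbs the latter via the $\sqrt{\lambda}\|Du\|_{L_p}+\lambda\|u\|_{L_p}$ terms on the left. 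The main obstacle is keeping careful track of all commutator terms and verifying that the ellipticity constant survives the modification of $a^{11}$; both are routine once $\gamma$ is taken small relative to $\delta$.

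For the final assertion, when $u$ vanishes outside $B_{R_0}$ and $b^j=c=0$, no partition of unity is needed: define $L_0$ by freezing $\tr a$ at a single reference point in $B_{R_0}$ (using \eqref{9.30.1} to see $|a_0^{11}-a^{11}|\leq \gamma$ on $\operatorname{supp} u$), apply Corollary \ref{corollary 2.2} directly to $u$, and absorb the $N\gamma\|D^2 u\|_{L_p}$ perturbation. Since no commutator terms arise, no lower-order $R_0$-dependent constants enter, so $\lambda_0=0$ suffices and $N$ is independent of $R_0$.
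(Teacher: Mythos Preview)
Your proposal is correct and follows essentially the same route as the paper: partition of unity in $x'$ at scale $R_0$, freeze $\tr a$ on each piece so that Corollary \ref{corollary 2.2} applies, and absorb the resulting $O(\gamma)\|D^2u\|_{L_p}$ perturbation, with the commutator terms handled by taking $\lambda_0$ large. The only difference is the freezing mechanism: where you modify $a^{11}$ additively by $\tr a(t,x_n',x'')-\tr a(t,x)$, the paper instead rescales the entire matrix multiplicatively, replacing $a^{jk}$ by $(\tr\sfa/\tr a)\,a^{jk}$ with $\sfa(t,x'')=a(t,0,x'')$, so that the new trace equals $\tr\sfa$; the perturbation error is then $\|L_0u-(a^{jk}D_{jk}u-u_t)\|_{L_p}\le N(\delta)\gamma\|D^2u\|_{L_p}$ via $|\tr\sfa/\tr a-1|\le N(\delta)\gamma$. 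Both devices achieve the same end and the remainder of the argument is identical.
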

\begin{proof} The idea is to use Corollary \ref{corollary 2.2}
in combination with
a standard method based  on freezing the coefficients and
 partitions of unity. We will show only the first step.
Assume that $u$ is of class $W^{1,2}_p $
and $u(t,x)=0$ for $|x|\geq R_{0}$.
Define $\sfa(t,x)=\sfa(t,x'')=a(t,0,x'')$ and
$$
L_{0}v=\tr\sfa(\tr a)^{-1}a^{jk}D_{jk}v-v_{t}.
$$
Then \eqref{2.57} holds with $L_{0}$ in place of $L$.
However, on the support of $u$
$$
|\tr\sfa(\tr a)^{-1}-1|\leq N(\delta)|\tr\sfa-\tr a|\leq
N(\delta)\gamma,
$$
so that
$$
\|L_{0}u-(a^{jk}D_{jk}u-u_{t})\|_{L_{p} }
\leq N(\delta)\gamma\|D^{2}u\|_{L_{p} },
$$
which shows how to choose $\gamma>0$ in order for this error
term times the $N$ from \eqref{2.57}
to be absorbed into the left-hand side of \eqref{2.57}.
\end{proof}

\mysection{Equations with coefficients measurable in $(t,x')$
and proof of Theorem \ref{thm1}}
                                            \label{sec3}

In this section we consider the operator
\begin{equation}
                                      \label{10.12.1}
Lu(t,x)=-u_t(t,x)+a^{jk}(t,x')D_{jk}u(t,x)
\end{equation}
assuming that $\tr a$ depends only on $t$.

First we generalize Theorem 2.5 of \cite{KK2}  (see also \cite{Chi})
proved for elliptic equations  with $a^{jk}$
depending only on one coordinate of $x$.

\begin{theorem}
                                         \label{theorem 9.8.1}
There is a constant $N=N(  \delta)$ such that
for any $u\in C^{\infty}_{0}$ and $\lambda\geq 0$
we have
$$
\lambda\|u\|_{L_{2}}+\sqrt{\lambda}
\|Du\|_{L_{2}}+\|D^{2}u\|_{L_{2}}+\|u_t\|_{L_{2}}
\leq N\|Lu-\lambda u\|_{L_{2}}.
$$
\end{theorem}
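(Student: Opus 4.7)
Proof plan. My plan is to combine Lemma~\ref{lem2.1} (the $W^{1,2}_2$-estimate in two space dimensions with $\tr a$ depending only on $t$) with a Fourier-analytic reduction in the $x''$ variables to a family of two-dimensional parabolic equations indexed by $\xi''\in\bR^{d-2}$.

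First I would carry out the standard reductions. By mollification of $a^{jk}$ and the method of continuity it suffices to prove the a priori estimate for $u\in C_0^\infty$ with smooth coefficients, with the constant $N$ independent of the smoothness. The time change of variables $s=\tfrac12\int_0^t\tr a(r)\,dr$ (well defined because $\tr a\ge 2\delta>0$) normalizes $\tr a\equiv 2$. When $d=2$ the theorem is immediate from Lemma~\ref{lem2.1} at $p=2$, so I focus on $d\ge 3$ from here on.

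Since $a^{jk}(t,x')$ does not depend on $x''$, the Fourier transform $v(t,x',\xi'')=\mathcal F_{x''}u$ converts $L_0 u-\lambda u=f$ into the two-dimensional parabolic equation in $(t,x')\in\bR^{3}$
\[
-v_t+A'v+B(\xi'')v-(Q(t,x',\xi'')+\lambda)v=\hat f,
\]
where $A'=\sum_{j,k=1}^{2}a^{jk}(t,x')D_{x'_jx'_k}$ still satisfies $\tr a\equiv 2$, the operator $B(\xi'')v=2i\sum_{j=1}^{2}\beta^j D_{x'_j}v$ has coefficients $\beta^j=\sum_{k\ge 3}a^{jk}\xi^k$ bounded by $N(\delta)|\xi''|$, and $Q=\sum_{j,k\ge 3}a^{jk}\xi^j\xi^k$ satisfies $\delta|\xi''|^2\le Q\le \delta^{-1}|\xi''|^2$ by ellipticity of the full matrix $(a^{jk})$. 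For each fixed $\xi''$ I would then apply Lemma~\ref{lem2.1} at $p=2$ to the 2D operator $-\partial_t+A'$ with constant effective spectral parameter $\Lambda=\lambda+C|\xi''|^2$ for $C=C(\delta)$ to be chosen, moving $-B(\xi'')v+(Q+\lambda-\Lambda)v$ to the right-hand side. Lemma~\ref{lem2.1} then yields
\[
\Lambda\|v\|_2+\sqrt\Lambda\|D_{x'}v\|_2+\|D^2_{x'}v\|_2+\|v_t\|_2\le N(\delta)\bigl(\|\hat f\|_2+|\xi''|\,\|D_{x'}v\|_2+|\xi''|^2\|v\|_2\bigr).
\]
Because $|\xi''|\le(\Lambda/C)^{1/2}$ and $|\xi''|^2\le \Lambda/C$, taking $C=C(\delta)$ sufficiently large absorbs the perturbation terms into the left-hand side. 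Squaring the resulting inequality, integrating over $\xi''\in\bR^{d-2}$, and applying Plancherel (so that weight factors $|\xi''|^{2k}$ on the left become $\|D^k_{x''}u\|_2^2$ and $\int\|\hat f\|_2^2\,d\xi''=\|f\|_2^2$ on the right) delivers the claimed estimate
\[
\lambda^{2}\|u\|_{2}^{2}+\lambda\|Du\|_{2}^{2}+\|D^{2}u\|_{2}^{2}+\|u_t\|_{2}^{2}\le N\|L_0 u-\lambda u\|_{2}^{2}.
\]

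The technically delicate point is the per-frequency absorption: one must verify that for any fixed $\delta>0$, choosing $C=C(\delta)$ large enough makes both $N(\delta)/\sqrt C$ and $N(\delta)/C$ strictly less than~$1$. This is what I expect to be the main obstacle, since $N(\delta)$ in Lemma~\ref{lem2.1} comes from the sharp 2D parabolic theory of \cite{Kr70} and is implicit. The crucial observation is that $N(\delta)$ is nonetheless a fixed function of $\delta$ alone, so such a $C$ exists and the absorption goes through uniformly in $\lambda\ge 0$ and $\xi''$. This structural fact, independent of the size of $\lambda$, is what allows the theorem to hold for all $\lambda\ge 0$ (i.e., without a positive lower bound $\lambda_0$) and yields a constant $N$ depending only on $\delta$ (and implicitly on $d$, through the Plancherel step).
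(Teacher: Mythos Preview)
Your overall Fourier-in-$x''$ strategy matches the paper's, but the absorption step for the zeroth-order term has a genuine gap. After moving $(Q+\lambda-\Lambda)v=(Q-C|\xi''|^{2})v$ to the right and applying Lemma~\ref{lem2.1}, the error contributes
\[
N(\delta)\,\bigl\|\,(Q-C|\xi''|^{2})v\,\bigr\|_{2}
\;\le\; N(\delta)\,\sup_{t,x'}\bigl|Q(t,x',\xi'')-C|\xi''|^{2}\bigr|\,\|v\|_{2}.
\]
Since $Q/|\xi''|^{2}$ ranges over $[\delta,\delta^{-1}]$, for large $C$ one has $\sup|Q-C|\xi''|^{2}|=(C-\delta)|\xi''|^{2}$, so the factor you must absorb into $\Lambda\|v\|_{2}\ge C|\xi''|^{2}\|v\|_{2}$ is $N(\delta)(C-\delta)/C\to N(\delta)$, not $N(\delta)/C$. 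In other words, writing the right-hand side as $N(\delta)|\xi''|^{2}\|v\|_{2}$ with $N$ independent of $C$ is inconsistent with letting $C$ be ``sufficiently large''; and for any fixed $C$ the resulting smallness condition reduces to $N(\delta)(1-\delta^{2})<1$, which fails for small $\delta$ because the constant from Lemma~\ref{lem2.1} is $\ge 1$. The first-order term $B(\xi'')v$ \emph{can} be absorbed as you say, but the variable zeroth-order potential $Q(t,x',\xi'')$ cannot be treated as a perturbation of a constant.

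The paper avoids this obstacle by first proving (Lemma~\ref{lemma 9.8.1}) the pointwise-in-$\xi''$ estimate
\[
(|\xi''|^{2}+\lambda)\,\|\tilde u(\cdot,\cdot,\xi'')\|_{L_{2}(\bR\times\bR^{2})}
\le N(\delta)\,\|\tilde f(\cdot,\cdot,\xi'')\|_{L_{2}(\bR\times\bR^{2})}
\]
via a comparison argument: either a probabilistic representation with Girsanov's transformation, or an elementary maximum-principle computation showing $|\tilde u|\le \hat u$ where $\hat u$ solves the 2D equation with the \emph{favorable} constant potential $\lambda+\delta|\xi''|^{2}$. This exploits the sign of $-Q\tilde u$ (dissipativity) rather than its size. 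Once this $L_{2}$ bound on $\tilde u$ is in hand, the remaining terms $\tilde u_{x'x'}$, $\tilde u_t$, $\tilde u_{x'x''}$ are controlled exactly by the interpolation you describe, now with the troublesome $|\xi''|^{4}\|\tilde u\|^{2}$ term already dominated by $\|\tilde f\|^{2}$.
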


The case that $d=2$ is taken care of by
Lemma \ref{lem2.1}.
To prove the theorem in case $d\geq3$, we need some preparations.
To start with, we assume without loss of generality that
the coefficients $a^{jk}$ are infinitely differentiable
and have bounded derivatives.

 Set $f=Lu-\lambda u$ and let $\tilde{g}(t,x',\xi'')$
be the Fourier transform of a function $g(t,x)$
with respect to $x''$. Then
$$
-\tilde
u_t(t,x',\xi'')+\sum_{j,k=1}^{2}a^{jk}(t,x')D_{jk}\tilde{u}(t,x',\xi'')
+i\sum_{j =1}^{2}B^{j}(t,x',\xi'')D_{j}\tilde{u}(t,x',\xi'')
$$
\begin{equation}
                                      \label{9.8.3}
-C(t,x',\xi'') \tilde{u}(t,x',\xi'')=\tilde{f}(t,x',\xi''),
\end{equation}
where
$$
B^{j}(t,x',\xi'')=2\sum_{k >2}a^{jk}(t,x')\xi^{k} ,
\quad C(t,x',\xi'')=\lambda+\sum_{j,k>2}
a^{jk}(t,x')\xi^{j}\xi^{k}.
$$

In the following lemma $\xi$ is considered
 as a parameter.

\begin{lemma}
                                         \label{lemma 9.8.1}
Let $d\geq3$ and $|\xi''|^2 + \lambda > 0$.
Then we have
\begin{equation}
                                                  \label{3.10.3}
|\tilde{u}(t,x',\xi'')|\leq\hat{u}(t,x',\xi''),
\end{equation}
where, for each $\xi''\in\bR^{d-2}$, $\hat{u}(t,x',\xi'')$
is the unique bounded classical
solution of
$$
-\hat{u}_t(t,x',\xi'')+\sum_{j,k=1}^{2}a^{jk}(t,x')D_{jk}\hat{u}(t,x',\xi'')
$$
 \begin{equation}
                                      \label{9.8.1}
 -( \lambda
 + \delta |\xi''|^{2} )\hat{u}(t,x',\xi'') =
-|\tilde{f}(t,x',\xi'')|  .
\end{equation}

Furthermore,
 \begin{equation}                             \label{9.8.2}
( |\xi''|^{2}
+\lambda) \|\tilde{u}( \cdot, \cdot,\xi'')\| _{L_{2}(\bR\times\bR^{2})}
\leq N(\delta)\|\tilde{f}( \cdot, \cdot,\xi'')\| _{L_{2}(\bR\times\bR^{2})}.
\end{equation}
\end{lemma}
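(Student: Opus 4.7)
My strategy is to establish \eqref{3.10.3} pointwise via a maximum-principle comparison in the two-dimensional variables $(t,x')$ with $\xi''$ held as a parameter, and then to deduce \eqref{9.8.2} by applying Lemma \ref{lem2.1} with $p=2$ to \eqref{9.8.1}. I will use that $\tilde u$ is smooth in $(t,x')$ for each fixed $\xi''$ (the coefficients $a^{jk}$ have already been assumed smooth with bounded derivatives) and that $\hat u$ exists as a bounded classical solution, since the zeroth-order coefficient $\lambda+\delta|\xi''|^{2}>0$ of $\hat u$ is strictly positive by the standing hypothesis $|\xi''|^{2}+\lambda>0$.

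\textbf{Pointwise inequality for $|\tilde u|$.} The crux of the argument is to prove that
$$
-\partial_{t}|\tilde u|+\sum_{j,k=1}^{2}a^{jk}D_{jk}|\tilde u|-(\lambda+\delta|\xi''|^{2})|\tilde u|\ge -|\tilde f|
$$
wherever $|\tilde u|>0$ (and then in the distributional sense across the zero set). On $\{|\tilde u|>0\}$ I would write $\tilde u=|\tilde u|e^{i\theta}$ and, using \eqref{9.8.3} together with the identities $\mathrm{Re}(\bar{\tilde u}D_{j}\tilde u)=|\tilde u|D_{j}|\tilde u|$ and $\mathrm{Im}(\bar{\tilde u}D_{j}\tilde u)=|\tilde u|^{2}D_{j}\theta$, compute directly that
$$
-\partial_{t}|\tilde u|+\sum_{j,k=1}^{2}a^{jk}D_{jk}|\tilde u|=|\tilde u|\bigl[a^{jk}(D_{j}\theta)(D_{k}\theta)+B^{j}D_{j}\theta+C\bigr]+|\tilde u|^{-1}\mathrm{Re}(\bar{\tilde u}\tilde f).
$$
The bracket is a quadratic polynomial in $D\theta\in\bR^{2}$ with positive-definite leading part, minimized at $D_{j}\theta=-\tfrac12(\mathfrak A^{-1})^{jk}B^{k}$ with minimum value $C-\tfrac14(\mathfrak A^{-1})^{jk}B^{j}B^{k}$, where $\mathfrak A=(a^{jk})_{j,k\le 2}$. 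The crucial input is the Schur-complement reformulation of the full $d$-dimensional uniform ellipticity:
$$
\sum_{j,k>2}a^{jk}\xi^{j}\xi^{k}-\tfrac14(\mathfrak A^{-1})^{jk}B^{j}B^{k}\ge \delta|\xi''|^{2},
$$
so the bracket is $\ge\lambda+\delta|\xi''|^{2}$. Combined with $|\mathrm{Re}(\bar{\tilde u}\tilde f)|\le|\tilde u||\tilde f|$ this gives the displayed inequality on $\{|\tilde u|>0\}$, and a $\sqrt{|\tilde u|^{2}+\varepsilon^{2}}$ regularization (in which the same algebraic identity $|\tilde u|^{2}a^{jk}\mathrm{Re}(D_{j}\bar{\tilde u}D_{k}\tilde u)=a^{jk}\mathrm{Re}(\bar{\tilde u}D_{j}\tilde u)\mathrm{Re}(\bar{\tilde u}D_{k}\tilde u)+a^{jk}\mathrm{Im}(\bar{\tilde u}D_{j}\tilde u)\mathrm{Im}(\bar{\tilde u}D_{k}\tilde u)$ is used to control the extra gradient term produced by $D_{jk}\sqrt{|\tilde u|^{2}+\varepsilon^{2}}$) extends it to all of $\bR\times\bR^{2}$.

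\textbf{Comparison and $L_{2}$ estimate.} Subtracting the regularized inequality from \eqref{9.8.1}, the function $\hat u-\sqrt{|\tilde u|^{2}+\varepsilon^{2}}$ satisfies a 2D parabolic inequality with strictly negative zeroth-order coefficient $-(\lambda+\delta|\xi''|^{2})$; since both functions are bounded on $\bR\times\bR^{2}$, the standard maximum principle yields $\hat u\ge\sqrt{|\tilde u|^{2}+\varepsilon^{2}}-O(\varepsilon)$, and sending $\varepsilon\downarrow 0$ gives \eqref{3.10.3}. For \eqref{9.8.2}, Lemma \ref{lem2.1} with $p=2$ applied to \eqref{9.8.1} (here the hypothesis that $\tr a$ depends only on $t$ is exactly what is needed) yields
$$
(\lambda+\delta|\xi''|^{2})\|\hat u(\cdot,\cdot,\xi'')\|_{L_{2}(\bR\times\bR^{2})}\le N(\delta)\|\tilde f(\cdot,\cdot,\xi'')\|_{L_{2}(\bR\times\bR^{2})}.
$$
Combining with \eqref{3.10.3} and the elementary bound $\lambda+|\xi''|^{2}\le\delta^{-1}(\lambda+\delta|\xi''|^{2})$ (WLOG $\delta\le 1$) finishes the proof.

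\textbf{Main obstacle.} The delicate point is obtaining the exact constant $\delta$ as the effective damping in the scalar equation for $|\tilde u|$. A naive Cauchy--Schwarz estimate on the first-order term $iB^{j}D_{j}\tilde u$ of \eqref{9.8.3} loses a factor and would produce an analogue of \eqref{9.8.1} only with $\delta$ replaced by a strictly smaller constant; only the Schur-complement cancellation above, which encodes the full $d$-dimensional uniform ellipticity rather than merely that of the $2\times 2$ block $\mathfrak A$, delivers the sharp $\delta$ stated in \eqref{9.8.1}.
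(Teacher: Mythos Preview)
Your argument is correct. The paper's \emph{main} proof of this lemma takes a genuinely different route: it uses probability theory, representing $\tilde u$ via It\^o's formula and a complex Girsanov-type exponential $\rho_t(\xi'')$ along the diffusion $dx'_t=\sigma(t_0-t,x'_t)\,dw_t$, and the key estimate $|\rho_t(\xi'')|\le e^{-(\lambda+\delta|\xi''|^2)t}$ is obtained from exactly the same Schur-complement/ellipticity inequality $(1/2)|\hat B|^2\le C-\lambda-\delta|\xi''|^2$ that you isolate; the pointwise bound \eqref{3.10.3} then drops out of the stochastic representation, and \eqref{9.8.2} is deduced from Lemma~\ref{lem2.1} just as you do. Immediately after the proof, however, the paper includes a Remark giving a second, purely PDE proof of \eqref{3.10.3} that is essentially your argument: a direct computation of $-\partial_t|\tilde u|+\sum_{j,k\le 2}a^{jk}D_{jk}|\tilde u|$ (carried out there with $\mathrm{Re}/\mathrm{Im}$ of $\bar{\tilde u}D_j\tilde u$ rather than your polar form $\tilde u=|\tilde u|e^{i\theta}$, but algebraically equivalent) followed by the maximum principle on the open set $\{\tilde u\ne 0\}$. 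Your $\sqrt{|\tilde u|^2+\varepsilon^2}$ regularization is a legitimate alternative to the paper's restriction to $\{\tilde u\ne 0\}$; either avoids the nondifferentiability of $|\cdot|$ at the origin. In short, your approach matches the paper's alternative proof and differs from its primary, probabilistic one; the probabilistic proof buys a clean representation formula but requires stochastic calculus, while your route is more self-contained.
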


\begin{proof} The idea of the proof is to
eliminate the first-order  terms in \eqref{9.8.3}
by using probability theory and Girsanov's transformation.
Let $a'$ be the $2\times 2$ matrix, which stands at
 the upper left corner of $a$.
Set $\sigma=\sqrt{2a'}$.
Fix a point  $(t_0,x')$  and let $x'_{t}$ be the solution
of the following It\^o's equation
$$
x'_{t}=x'+\int_{0}^{t}\sigma(t_0-s,x'_{s})\,dw_{s}
$$
on a probability space carrying a two-dimensional
Wiener process $w_{t}$. Also set
$$
B=(B^{1},B^{2}),\quad \hat{B}= B\sigma^{-1} ,
$$
$$
\rho_{t}(\xi'')=\exp\big(i\int_{0}^{t}\hat{B}(t_0-s,x'_{s},\xi'')\,dw_{s}
$$
$$
-\int_{0}^{t}(C(t_0-s,x'_{s},\xi'')-(1/2)|\hat{B}(t_0-s,x'_{s},\xi'')|^{2})
\,ds\big).
$$
As is easy to check by using It\^o's formula
and \eqref{9.8.3}
$$
d\big(\rho_{t}(\xi'')\tilde{u}(t_0-t,x'_{t},\xi'')\big)
=\rho_{t}(\xi'')\tilde{f}(t_0-t,x'_{t},\xi'')\, dt
$$
$$
+[i\rho_{t}(\xi'')\tilde{u}(t_0-t,x'_{t},\xi'')\hat{B}(t_0-t,x'_{t},\xi'')
$$
$$
+\rho_{t}(\xi'')
D\tilde{u}(t_0-t,x'_{t},\xi'')\sigma(t_0-t,x'_{t})]\,dw_{t}.
$$
We integrate this relation between $0$ and $T\in(0,\infty)$
and take expectations of the result. Then, since $\tilde{u}$
and $\hat{B}$ are bounded ($\xi''$  is fixed),
the expectation of the stochastic integral disappears and we
obtain
\begin{equation}
                                                  \label{9.8.5}
\tilde{u}(t_0,x' ,\xi'')=
E\rho_{T}(\xi'')\tilde{u}(t_0-T,x'_{T},\xi'')-E\int_{0}^{T}
\rho_{t}(\xi'')\tilde{f}(t_0-t,x'_{t},\xi'')\, dt.
\end{equation}
Next observe that
$$
 \delta|\xi''|^{2}\leq
\delta|\xi  |^{2}\leq a^{jk}(t,x')\xi^{j}\xi^{k}
$$
$$
=(1/2)|\sigma(t,x') \xi'|^{2}+\sum_{j=1}^{2}\xi^{j}B^{j}
(t,x',\xi'')
+C(t,x',\xi'')-\lambda.
$$
Substituting $ \xi'\to\sigma^{-1}(t,x')\xi' $ we see that
$$
0\leq(1/2) |\xi'|^{2}
+\sum_{j=1}^{2}\xi^{j}\hat{B}^{j}
(t,x',\xi'')
+C(t,x',\xi'')-\lambda-\delta|\xi''|^{2}.
$$
Since this is true for any $\xi'$, we have
$$
\big|(1/2)\sum_{j=1}^{2}\xi^{j}\hat{B}^{j}
(t,x',\xi'')\big|^{2}\leq(1/2) |\xi'|^{2}
(C(t,x',\xi'')-\lambda-\delta|\xi''|^{2}),
$$
$$
(1/2) | \hat{B}
(t,x',\xi'') |^{2}\leq
 C(t,x',\xi'')-\lambda-\delta|\xi''|^{2}  ,
$$
implying that
$$
|\rho_{t}(\xi'')|\leq e^{-(\lambda+\delta|\xi''|^{2})t}.
$$
Therefore, passing to the limit as $T\to\infty$ in \eqref{9.8.5}
we obtain
$$
\tilde{u}(t_0,x' ,\xi'')=
 -E\int_{0}^{\infty}
\rho_{t}(\xi'')\tilde{f}(t_0-t,x'_{t},\xi'')\, dt,
$$
$$
|\tilde{u}(t_{0},x' ,\xi'')|\leq
 E\int_{0}^{\infty}
 |\tilde{f}(t_0-t,x'_{t},\xi'')|e^{-(\lambda+\delta|\xi''|^{2})t}\, dt
=:\hat{u}(t_0,x',\xi'') .
$$

 Next notice that
 equation \eqref{9.8.1}
  has a unique solution in $W^{1,2}_{2}(\bR\times \bR^{2})$ by Lemma
\ref{lem2.1}. It is a bounded classical  solution
 since $u\in C^{\infty}_{0}$, $a$ is smooth, and $\tilde{f}$ is
Lipschitz continuous in
$(t,x')$. This solution is the above
$\hat{u}$ which is proved by using It\^o's formula in the same way as above.
Estimate \eqref{9.8.2} for $\hat{u}$ in place of $\tilde{u}$
follows from Lemma \ref{lem2.1}. Having \eqref{9.8.2} for $\hat{u}$
in place of $\tilde{u}$ gives us \eqref{9.8.2} as is.
The lemma is proved.
\end{proof}

\begin{remark}
Inequality \eqref{3.10.3} can also be proved without using
probability theory along the following lines. By the maximum principle,
we have $\hat u\ge 0$. Fix $\xi''$ and let
$$
\Omega=\{(t,x')\in \bR\times \bR^2\,:\,{\tilde u}(t,x',
\xi'')\neq 0\},
$$
which is open and bounded. For any $(t,x')\in \Omega$, $|{\tilde u}|$
 has  continuous first derivatives in $(t,x')$ and
  second derivatives in $x'$
 and we have
$$
D_t|{\tilde u}|=\frac 1 {2|{\tilde u}|}( \bar{{\tilde u}}D_t{\tilde u}+{\tilde u}
D_t\bar{{\tilde u}}),
$$
$$
D_j|{\tilde u}|=\frac 1 {2|{\tilde u}|}(\bar{{\tilde u}}D_j{\tilde u} +{\tilde u}
D_j\bar{{\tilde u}}),
$$
$$
D_{jk}|{\tilde u}|=\frac 1 {2|{\tilde u}|}(\bar{{\tilde
u}}D_{jk}{\tilde u} +{\tilde u}D_{jk}\bar{{\tilde u}}
+(D_j{\tilde u}) D_k\bar{{\tilde u}}+(D_k{\tilde u})
D_j\bar{{\tilde u}})
$$
$$
-\frac 1 {4|{\tilde u}|^3}(\bar{{\tilde u}}D_j{\tilde u}+{\tilde u} D_j\bar{{\tilde
u}})(\bar{{\tilde u}}D_k{\tilde u}+{\tilde u} D_k\bar{{\tilde u}} ).
$$
Therefore, by \eqref{9.8.3}
$$
-D_t|\tilde u|+\sum_{j,k=1}^2 a^{jk}D_{jk}|{\tilde u}|=\text{Re}(-D_t|\tilde u|+a^{jk}D_{jk}|{\tilde u}|)
$$
$$=\frac 1 {|{\tilde u}|} \text{Re}(\bar {\tilde u} \tilde f+C|{\tilde u}|^2)-\frac 1 {|{\tilde u}|}\sum_{j=1}^2 B^j \text{Im}({\tilde u} D_j \bar {\tilde u})
$$
$$
+\sum_{j,k=1}^2 \frac {a^{jk}} {4|{\tilde u}|^3}[2|{\tilde u}|^2
((D_j{\tilde u}) D_k\bar{{\tilde u}}+(D_k{\tilde u}) D_j\bar{{\tilde u}})
$$
$$
-(\bar{{\tilde u}}D_j{\tilde
u}+{\tilde u} D_j\bar{{\tilde u}})(\bar{{\tilde u}}D_k{\tilde u}+{\tilde
u} D_k \bar{{\tilde u}})].
$$
The last sum on the right-hand side above is equal to
$$
\sum_{j,k=1}^2\frac {a^{jk}} {4|{\tilde u}|^3}[|{\tilde u}|^2
((D_j{\tilde u}) D_k\bar{{\tilde u}}+(D_k{\tilde u}) D_j\bar{{\tilde u}})-\bar{{\tilde
u}}^2 (D_j{\tilde u}) D_k{\tilde u}-
{\tilde u}^2 (D_j\bar {\tilde u}) D_k \bar {\tilde u}]
$$
$$
=\sum_{j,k=1}^2-\frac {a^{jk}} {4|{\tilde u}|^3}({\tilde u} D_j \bar {\tilde u}-\bar {\tilde u} D_j {\tilde u})({\tilde u} D_k \bar {\tilde u}-\bar {\tilde u} D_k {\tilde u})
$$
$$
=\sum_{j,k=1}^2\frac {a^{jk}} {|{\tilde u}|^3} \text{Im}({\tilde u} D_j \bar {\tilde u}) \text{Im}({\tilde u} D_k \bar {\tilde u}).
$$
Thus,
$$
-D_t|\tilde u|+\sum_{j,k=1}^2 a^{jk}D_{jk}|{\tilde u}|\ge -|\tilde f|+
\big(\lambda +\sum_{j,k>2}a^{jk}(x')\xi^j\xi^k\big)|{\tilde u}|
$$
$$
-\frac 2{|{\tilde u}|}\sum_{j=1}^2 \sum_{k>2}a^{jk}(x')\xi^k \text{Im}({\tilde u} D_j \bar {\tilde u})
+\sum_{j,k=1}^2\frac {a^{jk}} {|{\tilde u}|^3} \text{Im}({\tilde u} D_j \bar {\tilde u}) \text{Im}({\tilde u} D_k \bar {\tilde u})
$$
$$
\ge -|\tilde f|+\lambda |{\tilde u}|+\delta|\xi''|^2 |{\tilde u}|.
$$
In the last inequality we used the uniform ellipticity condition.
By the maximum principle, we obtain \eqref{3.10.3}.
\end{remark}

\begin{proof}[Proof of Theorem \ref{theorem 9.8.1}]
Recall that we may assume   $d\geq3$.
By squaring both  sides  of \eqref{9.8.2}, integrating with respect to
$\xi''$, and using Parseval's identity we obtain
$$
\lambda^{2}\|u \|_{L_{2}}^{2}+
\|u_{x''x''}\|_{L_{2}}^{2}\leq N(\delta)\|f\|_{L_{2}}^{2},
$$
which along with Corollary \ref{corollary 2.2}
 proves the theorem with a constant
$N$ perhaps depending on $d$ and $\delta$.

To show that it is independent of $d$, we
 use Lemma \ref{lem2.1} to get
$$
\|\tilde{u}_{x'x'}(\cdot,\cdot,\xi'')\|^{2}_{L_{2}(\bR\times\bR^{2})}
+\|\tilde{u}_t(\cdot,\cdot,\xi'')\|^{2}_{L_{2}(\bR\times\bR^{2})}
$$
$$
\leq N
\big\|-\tilde{u}_t+\sum_{j,k=1}^{2}a^{jk} D_{jk}\tilde{u}(\cdot,\cdot,\xi'')
\big\|^{2}_{L_{2}(\bR\times\bR^{2})}.
$$
We also use that
$$
|B(t,x',\xi'')|\leq N|\xi''|,
\quad C(t,x',\xi'')
\leq N(\lambda+|\xi''|^{2}).
$$
Then from Lemma \ref{lemma 9.8.1} and \eqref{9.8.3}
we conclude that for $\xi''\ne0$
$$
\|\tilde{u}_{x'x'}(\cdot,\cdot,\xi'')\|^{2}_{L_{2}(\bR\times\bR^{2})}
+\|\tilde{u}_t(\cdot,\cdot,\xi'')\|^{2}_{L_{2}(\bR\times\bR^{2})}
$$
$$
\leq N (|\xi''|^{2}\|\tilde{u}_{x' }(\cdot,\cdot,\xi'')
\|^{2}_{L_{2}(\bR\times\bR^{2})}+
\|\tilde{f} (\cdot,\cdot,\xi'')\|^{2}_{L_{2}(\bR\times\bR^{2})}).
$$
Here for any $\varepsilon>0$
$$
 |\xi''|^{2}\|\tilde{u}_{x' }(\cdot,\cdot,\xi'')
\|^{2}_{L_{2}(\bR\times\bR^{2})}
$$
$$
\leq
\varepsilon\|\tilde{u}_{x'x' }(\cdot,\cdot,\xi'')
\|^{2}_{L_{2}(\bR\times\bR^{2})}+
N\varepsilon^{-1}|\xi''|^{4}
\|\tilde{u} (\cdot,\cdot,\xi'')
\|^{2}_{L_{2}(\bR\times\bR^{2})}
$$
\begin{equation}
                                             \label{8.9.6}
\leq
\varepsilon\|\tilde{u}_{x'x' }(\cdot,\cdot,\xi'')
\|^{2}_{L_{2}(\bR\times\bR^{2})}+N
\|\tilde{f} (\cdot,\cdot,\xi'')\|^{2}_{L_{2}(\bR\times\bR^{2})}.
\end{equation}
It follows that
$$
\|\tilde{u}_{x'x'}(\cdot,\cdot,\xi'')\|^{2}_{L_{2}(\bR\times\bR^{2})}
+\|\tilde{u}_t(\cdot,\cdot,\xi'')\|^{2}_{L_{2}(\bR\times\bR^{2})}
\leq N
\|\tilde{f} (\cdot,\cdot,\xi'')\|^{2}_{L_{2}(\bR\times\bR^{2})} .
$$
Upon integrating this inequality with respect to $\xi''$
and using Parseval's identity we arrive at
$$
\|u_{x'x'}\|_{L_{2}}^{2}+\|u_{t}\|_{L_{2}}^{2}\leq N\|f\|_{L_{2}}^{2}.
$$
Going back to \eqref{8.9.6} and integrating again
we obtain
$$
\|u_{x'x''}\|_{L_{2}}^{2}\leq N\|f\|_{L_{2}}^{2}.
$$
After that, to finish proving the theorem, it
only remains to combine the above estimates
and use   the interpolation inequality
$$
\lambda^2 \|Du\|_{L^2}^2\le \lambda^4 \|u\|_{L^2}^2+\|\Delta u\|_{L^2}^2=
\lambda^4 \|u\|_{L^2}^2+\|D^2 u\|_{L^2}^2.
$$
The theorem is proved.
\end{proof}

Next we give a proof of Theorem \ref{thm1}.
\begin{proof}[Proof of Theorem \ref{thm1}]
Part iii) follows from the first two by using a scaling argument. By the
same reason as in the proof of Lemma \ref{lem2.1}, it suffices to prove
i) and ii) for $T=\infty$. In case
$T=\infty$,
assertion  i) is obtained from Theorem \ref{theorem 9.8.1}
 in the way outlined in the proof
of Theorem \ref{theorem 9.30.1} and assertion ii)
is obtained by the method of continuity.
 \end{proof}

Next, we go back
to considering the operator $L$ introduced in \eqref{10.12.1}
and present the key results
of this section.

\begin{theorem}
                                     \label{theorem 9.10.1}
Let $d\geq3$, $\kappa \ge 2$, and $r > 0$.
Assume that $u \in
C_0^{\infty}
$  and $Lu =0$ in $Q_{\kappa r}$.
Then there exist constants
$N = N(d, \delta)$ and $\alpha=\alpha(d,\delta)\in (0,1]$ such that
for any multi-index  $\gamma=(\gamma',\gamma'')$
\begin{equation*}
\dashint_{Q_r} |D^{\gamma''}u  -
 (D^{\gamma''}u)_{Q_r}|^2 \, dx\,dt
\le N \kappa^{-2\alpha} \left(|D^{\gamma''}u|^{2}\right)_{Q_{\kappa r}}.
\end{equation*}
\end{theorem}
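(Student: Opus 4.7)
The plan is to exploit the crucial algebraic fact that the coefficients $a^{jk}(t,x')$ do not depend on $x''$. Setting $v = D^{\gamma''} u$, the differential operator $D^{\gamma''}$ commutes with $L$, so $v$ is again in $C_0^{\infty}$ and satisfies $Lv = 0$ in $Q_{\kappa r}$. This reduces the theorem to a mean-oscillation estimate for an arbitrary $L$-harmonic function $v$, under no hypotheses on the $a^{jk}$ beyond uniform ellipticity and boundedness.

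For such a $v$ I would invoke the classical Krylov-Safonov interior H\"older estimate for nondivergence-form uniformly parabolic equations with bounded measurable coefficients: there exist $\alpha = \alpha(d,\delta) \in (0,1]$ and $N = N(d,\delta)$ such that any solution $w$ of $Lw = 0$ in $Q_{\rho}$ obeys
$$
\osc_{Q_{\rho_1}} w \le N (\rho_1/\rho)^{\alpha}\,\osc_{Q_{\rho}} w, \qquad \rho_1 \le \rho/2.
$$
Applying this to $v$ with $\rho = \kappa r/2$ (permissible since $\kappa \ge 2$) and $\rho_1 = r$ yields
$$
\osc_{Q_r} v \le N \kappa^{-\alpha}\,\osc_{Q_{\kappa r/2}} v \le 2N \kappa^{-\alpha}\,\sup_{Q_{\kappa r/2}} |v|.
$$

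To upgrade the pointwise control into an $L^2$ control, I would invoke the standard local maximum principle (a consequence of the Krylov-Safonov Harnack inequality together with an iteration argument applied to $v^{\pm}$),
$$
\sup_{Q_{\kappa r/2}} |v| \le N \Bigl(\dashint_{Q_{\kappa r}} v^2 \, dx\,dt\Bigr)^{1/2}.
$$
Combining this with the previous display and the pointwise inequality $|v - (v)_{Q_r}|^{2} \le (\osc_{Q_r} v)^{2}$, then averaging over $Q_r$, gives exactly the claimed estimate for $v = D^{\gamma''} u$.

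The main obstacle is not any single hard step, but rather citing (or re-deriving) the correct form of the two Krylov-Safonov ingredients — the interior H\"older bound and the $L^{\infty}$-to-$L^{2}$ local maximum principle — for classical solutions of nondivergence parabolic equations with merely bounded measurable coefficients. If one wished to avoid the pointwise Krylov-Safonov machinery altogether, an alternative route is a Campanato-type iteration that uses the $W^{1,2}_{2}$ estimate already established in Theorem \ref{thm1} together with a Caccioppoli inequality (valid here since $v$ and $u$ solve the same equation); this bypasses the local maximum principle at the cost of a longer iterative scheme and a dimension-dependent constant.
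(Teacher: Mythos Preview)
Your proposal is correct and follows essentially the same approach as the paper: reduce to $\gamma''=0$ via $LD^{\gamma''}u=0$, then combine the Krylov--Safonov interior H\"older oscillation decay with the $L^{\infty}$-to-$L^{2}$ local maximum principle, and finish with the trivial bound $|v-(v)_{Q_r}|^{2}\le(\osc_{Q_r}v)^{2}$. The paper packages the two Krylov--Safonov ingredients into a single citation (Lemma~4.2.4 of \cite{Kr85} and Theorem~7.21 of \cite{Li}) giving $\osc_{Q_{1/\kappa}}u\le N\kappa^{-\alpha}\|u\|_{L_2(Q_1)}$ directly, whereas you separate them, but the argument is the same; one minor caveat is that your condition $\rho_1\le\rho/2$ with $\rho=\kappa r/2$ strictly requires $\kappa\ge4$, though the range $2\le\kappa<4$ is trivially absorbed into the constant~$N$.
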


\begin{proof} By observing that $LD^{\gamma''}u=0$ we see that
it suffices to concentrate on $\gamma=0$. By using scaling
we reduce the general situation to the one in which $r=1$.
By Lemma 4.2.4 of \cite{Kr85}
and Theorem 7.21 of \cite{Li}
$$
\osc _{Q_{1/\kappa}}u
\le N\kappa^{-\alpha} \|u \|_{L_2(Q_1)}
$$
with $\alpha$ and $N$ as in the statement. Scaling this estimate
shows that
$$
\osc _{Q_{1}}u
\le N \kappa^{-\alpha}\left(| u|^{2}\right)_{Q_{\kappa }}^{1/2}.
$$
It only remains to observe that
$$
\dashint_{Q_1} | u  -
 ( u)_{Q_1}|^2 \, dx\,dt\leq {\color{red}N}
 (\osc _{Q_{1}}u)^2 .
$$
The theorem is proved.
\end{proof}

\begin{theorem}
                                        \label{thm12.02}
Let $d\geq3$ and let
$\alpha$ be the constant in Theorem \ref{theorem 9.10.1}. Then there is
a constant $N$ depending only on $d,\delta$ such that for any $u\in
W^{1,2}_{2,{\rm loc}}$, $r\in (0,\infty)$, and $\kappa\ge 4$
$$
\left(|u_{x''x''}(t,x)-(u_{x''x''})_{Q_r}|^2\right)_{Q_r}
$$
\begin{equation}
                                \label{eq12.05}
\le N\kappa^{d+2}
\left(| Lu|^2\right)_{Q_{\kappa
r}}+N\kappa^{-2\alpha}
\left(|u_ {x''x'' }|^2\right)_{Q_{\kappa
r}}.
\end{equation}
\end{theorem}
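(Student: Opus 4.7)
The plan is a Campanato-type decomposition $u = v + w$ on the cylinder $Q_R$ with $R = \kappa r/2 \geq 2r$, where $v$ absorbs the forcing $Lu$ globally and $w = u - v$ satisfies $Lw = 0$ on $Q_R$, so that Theorem~\ref{theorem 9.10.1} delivers the oscillation decay. Under the parabolic scaling $(t,x) \mapsto (r^2 t, rx)$, both sides of \eqref{eq12.05} scale as $r^{-4}$ and the transformed coefficients still satisfy the standing hypotheses (same $\delta$, $\tr a$ still depending only on $t$); so I would reduce to $r = 1$ and set $R = \kappa/2$.

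Next I would construct $v \in W^{1,2}_{2,{\rm loc}}(\bR^{d+1})$ solving $Lv = \chi_{Q_R} Lu$ on all of $\bR^{d+1}$ with
\[
\|D^{2} v\|_{L_2(\bR^{d+1})}^2 + \|v_t\|_{L_2(\bR^{d+1})}^2 \leq N \int_{Q_R} |Lu|^2\,dx\,dt.
\]
The operator $L$ here satisfies the hypotheses of Theorem~\ref{thm1} iii) ($b^j \equiv c \equiv 0$ and $\tr a$ depending only on $t$), which gives $\lambda_0 = 0$ in the \emph{a priori} estimate and unique $W^{1,2}_2$-solvability for every $\lambda > 0$. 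To reach $\lambda = 0$, I would solve $L v_n - \lambda_n v_n = \chi_{Q_R} Lu$ for $\lambda_n \downarrow 0$, use the $\lambda$-uniform bounds on $\|D^2 v_n\|_{L_2}$ and $\|(v_n)_t\|_{L_2}$, and extract a weak $L_2$ limit (normalizing by subtracting affine-in-$x$ functions to control the lower-order norms, which do not enter the final estimate). Setting $w = u - v$ on $Q_R$ then yields $Lw = 0$ there.

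Applying Theorem~\ref{theorem 9.10.1} (extended from $C_0^\infty$ to $W^{1,2}_{2,{\rm loc}}$-solutions by mollification, with the coefficients first smoothed) with unit radius and with $\kappa/2$ in place of $\kappa$ gives
\[
\dashint_{Q_1} |w_{x''x''} - (w_{x''x''})_{Q_1}|^2 \, dx\,dt \leq N(\kappa/2)^{-2\alpha}\bigl(|w_{x''x''}|^2\bigr)_{Q_R}.
\]
Using that $c \mapsto (|g - c|^2)_{Q_1}$ is minimized at $c = (g)_{Q_1}$, I take $c = (w_{x''x''})_{Q_1}$ and split $u = v + w$ to obtain
\[
\bigl(|u_{x''x''} - (u_{x''x''})_{Q_1}|^2\bigr)_{Q_1} \leq 2\bigl(|v_{x''x''}|^2\bigr)_{Q_1} + 2\bigl(|w_{x''x''} - (w_{x''x''})_{Q_1}|^2\bigr)_{Q_1}.
\]
The first term is bounded by $|Q_1|^{-1}\|D^2 v\|_{L_2}^2 \leq N R^{d+2}(|Lu|^2)_{Q_R} \leq N\kappa^{d+2}(|Lu|^2)_{Q_\kappa}$, while on $Q_R$ one gets $(|v_{x''x''}|^2)_{Q_R} \leq N(|Lu|^2)_{Q_\kappa}$, hence $(|w_{x''x''}|^2)_{Q_R} \leq 2(|u_{x''x''}|^2)_{Q_\kappa} + N(|Lu|^2)_{Q_\kappa}$. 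Combining and absorbing $\kappa^{-2\alpha}(|Lu|^2)_{Q_\kappa}$ into $N\kappa^{d+2}(|Lu|^2)_{Q_\kappa}$ gives \eqref{eq12.05}. The main obstacle is the extension of Theorem~\ref{thm1}'s solvability from $\lambda > 0$ down to $\lambda = 0$, but this is harmless because only the top-order norms of $v$ enter the estimate and those are controlled uniformly in $\lambda$.
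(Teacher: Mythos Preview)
Your decomposition $u=v+w$ with one piece absorbing the forcing and the other solving $L(\cdot)=0$ on $Q_{\kappa r/2}$ is exactly the paper's strategy (with the labels $v,w$ swapped), and the way you feed Theorem~\ref{theorem 9.10.1} and the global $W^{1,2}_2$ estimate into the final chain of inequalities matches the paper line for line.

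The one place where your argument is genuinely softer than the paper's is the construction of $v$ at $\lambda=0$. Your plan---solve $Lv_n-\lambda_n v_n=\chi_{Q_R}Lu$, pass to a weak limit of $D^2v_n,(v_n)_t$, and ``normalize by subtracting affine-in-$x$ functions''---does not quite close: subtracting constants $a+b\cdot x$ leaves the equation intact but gives no global $L_2(\bR^{d+1})$ control on $v_n$ or $Dv_n$, so it is not clear that the weak limits patch together into a single function $v$ with $Lv=\chi_{Q_R}Lu$ on $Q_R$; and before the limit, $w_n=u-v_n$ satisfies $Lw_n=\lambda_n v_n$ on $Q_R$, which you only know is \emph{bounded} in $L_2$, not small. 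The paper sidesteps this entirely: it first reduces to smooth $a^{jk}$ and $u\in C_0^\infty$ (so $f:=Lu\in C_0^\infty$), then takes a cutoff $\zeta$ equal to $1$ on $Q_{\kappa r/2}$ and supported in $Q_{\kappa r}$, and defines $v$ as the \emph{classical Cauchy-problem solution} of $Lv=(1-\zeta)f$ on $(S,T)\times\bR^d$ with zero initial data---no $\lambda$ is needed. Then $w:=u-v$ is already an honest $W^{1,2}_2(\bR^{d+1}_0)$ function satisfying $Lw=\zeta f$, and Theorem~\ref{thm1}\,iii) is invoked as an \emph{a priori estimate} (valid at $\lambda=0$), not as a solvability statement. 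If you adopt this device---reduce to $u\in C_0^\infty$, build the homogeneous piece via a Cauchy problem, and apply the $\lambda=0$ estimate to the difference---your proof becomes complete and identical to the paper's.
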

\begin{proof}
Fix an $r\in (0,\infty)$ and a
$\kappa\ge 4$. We may certainly assume that $a^{jk}$ are infinitely differentiable and
have bounded derivatives. Also changing $u$ for large $|t| + |x|$ does not affect
\eqref{eq12.05}. Therefore, we may assume that $u \in {\color{red}W^{1,2}_p}{\color{blue}W^{1,2}_2}$
 and moreover $u \in C^\infty_0 $.

Now we define $f= Lu-\lambda u\in C_0^\infty $. Take a $\zeta\in
C_0^\infty $ such that $\zeta=1$ in $Q_{\kappa r/2}$ and $\zeta=0$
outside the closure of $Q_{\kappa r}\cup (-Q_{\kappa r})$. Define $v$ to
be the unique $W^{1,2}_2((S,T)\times\bR^{d})$-solution of the equation
$$
Lv=(1-\zeta)f
$$
with zero initial condition at $t=S$, where $S<-\kappa r$ and $ T>\kappa r$ are
such that $u(t,x)=0$ for
$t\not\in(S,T)$. By classical theory we know that such a $v$ indeed exists and
is unique and infinitely differentiable. Since $(1-\zeta)f= 0$ in $Q_{\kappa
r/2}$ and $\kappa/2\ge 2$, by Theorem \ref{theorem 9.10.1} with $v$ in place of
 $u$  we obtain
$$
\left(|v_{x''x''} -
 (v_{x''x''} )_{Q_r}|^2\right)_{Q_{r}}
$$
\begin{equation}
                                          \label{13.4.45}
\le N \kappa^{-2\alpha}
\left(|v_ {x''x'' }|^{2}\right)_{Q_{\kappa r/2}}
\le N \kappa^{-2\alpha}
\left(|v_ {x''x''} |^{2}\right)_{Q_{\kappa r}}.
\end{equation}

On the other hand, obviously $\bar{v}(t,x):=v(t,x)I_{(S,T)}(t)$
is of class $W^{1,2}_{2}(\bR^{d+1}_{T})$ and the function $w := u -\bar{v} \in W^{1,2}_2(\bR^{d+1}_{T})$ satisfies
$$
Lw=\zeta f
$$
in $\bR^{d+1}_{0}$.
Therefore, by Theorem \ref{thm1} (iii),
$$
\int_{\bR_0^{d+1}} |w_{xx}|^2 \,dx\,dt
\le N\int_{\bR_0^{d+1}}|\zeta f|^2\,dx\,dt
\le N\int_{Q_{\kappa r}}|f|^2\,dx\,dt,
$$
which implies
\begin{equation}
                                        \label{eq13.4.41}
\left(|w_{xx}|^{2}\right)_{Q_{\kappa r}}\le
N\left(|f|^{2}\right)_{Q_{\kappa r}},
\end{equation}
and
\begin{equation}
                                        \label{13.4.46}
\left(|w_{xx}|^{2}\right)_{Q_{r}}
\le N\kappa^{d+2}\left(|f|^{2}\right)_{Q_{\kappa r}}.
\end{equation}
Combining \eqref{13.4.45}-\eqref{13.4.46} together, we conclude
$$
\left(|u_{x''x''} -
 (u_{x''x''} )_{Q_r}|^2\right)_{Q_{r}}\le \left(|v_{x''x''} -
 (v_{x''x''}
)_{Q_r}|^2\right)_{Q_{r}}+
N\left(|w_ {x''x'' }|^{2}\right)_{ Q_{r}  }
$$
$$
\le N \kappa^{-2\alpha} \left(|v_ {x''x''} |^{2}\right)_{Q_{\kappa r}}
+N\kappa^{d+2}\left(|f|^{2}\right)_{Q_{\kappa r}}
$$
$$
\le N \kappa^{-2\alpha} \left(|u_ {x''x'' }|^{2}\right)_{Q_{\kappa r}}
+N\kappa^{d+2}\left(|f|^{2}\right)_{Q_{\kappa r}}.
$$
The theorem is proved.
\end{proof}

\mysection{Proof of Theorem \ref{thm2}}
                                            \label{sec4}
We shall prove Theorem \ref{thm2} in this section. We consider the
operator
$$
L u=-u_t+a^{jk}D_{jk}u+b^{j}D_ju+cu,
$$
where $a^{jk}$ satisfy Assumption \ref{assump2} ($\gamma$) with
some $\gamma>0$ to be specified later. Assertion (iii) of
Theorem \ref{thm2}     is  obtained
from (i) and (ii) by using scaling. Assertion (ii)
is obtained from
(i) by the method of continuity. If $d=2$, assertion (i)
is derived from Lemma \ref{lem2.1} in a standard way
alluded to a few times before. Therefore, it only
 remains to prove assertion (i)
assuming that $d\geq3$.

Set
$$
L_0 u=-u_t+a^{jk}D_{jk}u.
$$

First we generalize Theorem \ref{thm12.02}.
\begin{theorem}
                                            \label{thm4.1}
Let $\alpha$ be the constant in
Theorem \ref{theorem 9.10.1}, $\gamma > 0$, $\tau,\sigma \in (1,\infty)$,
$1/\tau+1/\sigma=1$. Take a $u\in W^{1,2}_2$ and set $f=L_0 u$. Then under
Assumption
\ref{assump2} ($\gamma$) there exists a positive constant $N$ depending only on
$d$,
$\delta$, and $\tau$ such that, for any
$(t_0,x_0)\in \bR^{d+1}$,
$r\in (0,\infty)$, and $\kappa\ge 4$,
$$
\left(|u_{x''x''}(t,x)-(u_{x''x''})_{Q_r(t_0,x_0)}|^2\right)_{Q_r(t_0,x_0)}
$$
$$
\le N\kappa^{d+2}
\left(|f|^2\right)_{Q_{\kappa r}(t_0,x_0)}
+N\kappa^{d+2}\gamma^{1/\sigma}
\left(|u_{xx}|^{2\tau}\right)_{Q_{\kappa r}(t_0,x_0)}^{1/\tau}
$$
\begin{equation}
                                \label{eq13.5.05}
+N\kappa^{-2\alpha}
\left(|u_ {x''x'' }|^2\right)_{Q_{\kappa r}(t_0,x_0)},
\end{equation}
provided that $u$ vanishes outside  $Q_{R_0}$.

\end{theorem}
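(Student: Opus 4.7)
\emph{Strategy.} The plan is to reduce to Theorem~\ref{thm12.02} by freezing the leading coefficients into the form $\tilde a^{jk}(t,x')$ with $\tr\tilde a$ depending only on $t$, and then to absorb the freezing error into the $(|u_{xx}|^{2\tau})^{1/\tau}$-term by H\"older's inequality. Assume first $\kappa r\le R_{0}$; the complementary case is handled by an elementary volume comparison based on $\operatorname{supp}u\subset Q_{R_{0}}$.

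\emph{Freezing the coefficients.} After translation take $(t_{0},x_{0})=0$. Set
\[
\bar a^{jk}(t,x'):=\dashint_{B''_{\kappa r}}a^{jk}(t,x',x'')\,dx'',
\]
so that the second clause of Assumption~\ref{assump2}($\gamma$) yields $\dashint_{Q_{\kappa r}}|a^{jk}-\bar a^{jk}|\le\gamma$. To restore a trace depending only on $t$, set $\varphi(t):=\tr\bar a(t,0)$ and
\[
\tilde a^{jk}(t,x'):=\bar a^{jk}(t,x')\,\frac{\varphi(t)}{\tr\bar a(t,x')}.
\]
Then $\tr\tilde a\equiv\varphi$ by construction. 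Averaging the first clause of Assumption~\ref{assump2} over $B''_{\kappa r}$ gives $|\varphi(t)-\tr\bar a(t,x')|\le\gamma$ on $B'_{\kappa r}$, and combined with the lower bound $\tr\bar a\ge2\delta$ this yields $|\tilde a^{jk}-\bar a^{jk}|\le N(\delta)\gamma$ pointwise on $Q_{\kappa r}$. Hence
\[
\dashint_{Q_{\kappa r}}|a^{jk}-\tilde a^{jk}|\le N(\delta)\gamma,
\]
and a similar computation shows $\tilde a^{jk}$ is uniformly elliptic with constants depending only on $\delta$.

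\emph{Applying Theorem \ref{thm12.02} and the error estimate.} Let $\tilde L v:=-v_{t}+\tilde a^{jk}(t,x')D_{jk}v$, so that $\tilde L u=f+(\tilde a^{jk}-a^{jk})D_{jk}u=:\tilde f$. Since $\tilde a^{jk}$ now satisfies the hypotheses of Theorem~\ref{thm12.02},
\[
\bigl(|u_{x''x''}-(u_{x''x''})_{Q_r}|^{2}\bigr)_{Q_r}\le N\kappa^{d+2}(|\tilde f|^{2})_{Q_{\kappa r}}+N\kappa^{-2\alpha}(|u_{x''x''}|^{2})_{Q_{\kappa r}}.
\]
Splitting $|\tilde f|^{2}\le 2|f|^{2}+2|(\tilde a-a)D^{2}u|^{2}$ and applying H\"older's inequality with exponents $\sigma,\tau$,
\[
(|(\tilde a-a)D^{2}u|^{2})_{Q_{\kappa r}}\le\bigl(|\tilde a-a|^{2\sigma}\bigr)_{Q_{\kappa r}}^{1/\sigma}\bigl(|D^{2}u|^{2\tau}\bigr)_{Q_{\kappa r}}^{1/\tau}.
\]
Since $|\tilde a-a|$ is uniformly bounded and has $Q_{\kappa r}$-average at most $N\gamma$, the first factor is $\le N(\delta,\tau)\gamma^{1/\sigma}$, which delivers~\eqref{eq13.5.05}.

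\emph{Main obstacle.} The delicate step is the rescaling $\tilde a^{jk}=\bar a^{jk}\,\varphi/\tr\bar a$: the plain $x''$-average $\bar a^{jk}$ is the natural mean-approximation of $a^{jk}$ but its trace depends on $x'$, so Theorem~\ref{thm12.02} cannot be applied to it directly. The correction restores the trace condition at the price of a pointwise error of size $\gamma$, precisely of the form the H\"older argument is designed to absorb into $(|u_{xx}|^{2\tau})^{1/\tau}$, which is why the exponents $\sigma,\tau$ and the factor $\gamma^{1/\sigma}$ appear in the statement.
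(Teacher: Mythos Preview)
Your proof is correct and follows essentially the same approach as the paper: freeze the coefficients via $\mathsf{a}^{jk}=\bar a^{jk}\cdot\tr\bar a_{0}/\tr\bar a$ (the paper's notation for your $\tilde a$, with $\bar a_{0}(t)=\bar a(t,y'_{0})$ playing the role of your $\varphi$), apply Theorem~\ref{thm12.02}, and control the freezing error by H\"older's inequality. For the case $\kappa r\ge R_{0}$ the paper makes your ``volume comparison'' explicit by taking $Q=Q_{R_{0}}$ as the averaging cylinder and using $Q_{\kappa r}(t_{0},x_{0})\cap\operatorname{supp}u\subset Q_{R_{0}}$ together with $|Q_{R_{0}}|\le N(\kappa r)^{d+2}$.
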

\begin{proof}
We fix $(t_0,x_0)\in \bR^{d+1}$, $\kappa\ge 4$, and $r\in
(0,\infty)$.
  Choose $Q=(s_{1},s_{2})\times B'\times B''$ to be  $Q_{\kappa
r}(t_0,x_0)$ if $\kappa r< R_0 $ and   $Q_{ R_0 }$ if $\kappa
r\ge  R_0 $. Recall the definition of
$\bar{a}(t,x')$ given in Assumption \ref{assump2}, set $y'_{0}$
to be the center of $B'$, and introduce
$\bar{a}_{0}(t)=\bar{a}(t,y_{0}')$,
$$
\sfa^{jk}=\frac{\bar a^{jk}}{\tr\bar a} \tr \bar{a}_{0} ,\quad
\hat{f}= \sfa ^{jk}D_{jk}u-u_{t}.
$$
Obviously, $\sfa$ depends only on $(t,x')$,
$\tr\sfa=\tr \bar{a}_{0}$
 depends only on $t$ and takes values
  between $2\delta$ and $2\delta^{-1}$, and
$$
\dashint_{Q}|a^{jk}- \sfa ^{jk}|\,dx\,dt\leq N
\dashint_{Q}|a^{jk}\tr \bar{a}- \sfa ^{jk}\tr  \bar{a} |\,dx\,dt
$$
\begin{equation}
                                                   \label{9.16.1}
\leq N\dashint_{Q}|a^{jk}- \bar a ^{jk}|\,dx\,dt+
N\dashint_{Q}|\tr \bar{a}-  \tr \bar{a}_{0}
|\,dx\,dt\leq N\gamma.
\end{equation}
Also
$$
\hat{f} =  \left(\sfa^{jk} - a^{jk}\right) D_{jk}u + f.
$$

Then by Theorem \ref{thm12.02} with an appropriate translation  and $\sfa$ in
place of $a$,
$$
\dashint_{Q_r(t_0,x_0)} | u_{x''x''}
- \left( u_{x''x''} \right)_{Q_r(t_0x_0)} |^2 \, dx\,dt
$$
\begin{equation}							
                                \label{13.5.42}
\le N\kappa^{d+2}
\left(| \hat f|^2\right)_{Q_{\kappa r}(t_0,x_0)}
+N\kappa^{-2\alpha}
\left(|u_ {x''x'' }|^2\right)_{Q_{\kappa r}(t_0,x_0)},
\end{equation}
where $N$ and $\alpha$ depend only on $d$ and $\delta$.
By the definition of $\hat f$,
\begin{equation}							\label{13.5.52}
\int_{Q_{\kappa r}(t_0,x_0)} |\hat{f}|^2 \, dx\,dt
\le 2 \int_{Q_{\kappa r}(t_0,x_0)} |f|^2 \, dx\,dt
+2 I,
\end{equation}
where
$$
I =
\int_{Q_{\kappa r}(t_0,x_0)}
\big| (\sfa^{jk} - a^{jk}) D_{jk}u \big|^2 \, dx\,dt
$$
$$
= \int_{Q_{\kappa r}(t_0,x_0) \cap Q_{ R_0 }}
\big| (\sfa^{jk} - a^{jk}) D_{jk}u \big|^2 \, dx\,dt.
$$
By   H\"older's inequality, we have
\begin{equation}							\label{13.5.53}
I \le NI_1^{1/\sigma} I_2^{1/\tau},
\end{equation}
where
$$
I_1 = \sum_{j,k}
\int_{Q_{\kappa r}(t_0,x_0) \cap Q_{ R_0}}
| \sfa^{jk} - a^{jk} |^{2\sigma} \, dx\,dt,
\quad
I_2 = \int_{Q_{\kappa r}(t_0,x_0)} |u_{xx}|^{2\tau} \, dx\,dt.
$$
According to \eqref{9.16.1} we have
$$
I_1 \leq \sum_{j,k}\int_{Q }
| \sfa^{jk} - a^{jk} |^{2\sigma} \, dx\,dt\leq N\gamma|Q|
\le  N  (\kappa r)^{d+2} \gamma.
$$
This together with  \eqref{13.5.42}-\eqref{13.5.53} yields
\eqref{eq13.5.05}. The theorem is proved.
\end{proof}

\begin{remark}
                                         \label{remark 10.2.1}
Assume that $a^{jk}$ ($=a^{kj}$) are independent of $x''$
for $j=1,2$ and $k=1,...,d$. Also assume that
$\tr a$ depends only on $t$. Then in the above
proof we have
 $\sfa^{jk}=\bar{a}^{jk}$ for all $j,k$
and
$\sfa^{jk}=a^{jk}$ for $j=1,2$ and $k=1,...,d$.
Therefore, in the definition of $I$  we only need
to sum over $j,k\geq3$, so that only $u_{x''x''}$
are involved in its estimate. It follows that
in this case we can replace $|u_{xx}|^{2\tau}$
in \eqref{eq13.5.05} with $|u_{x''x''}|^{2\tau}$.
\end{remark}

\begin{lemma}
                                                 \label{lem4.2}
Let $\theta_0$ be the constant
in Lemma \ref{lem2.1},
$p\in (2,2+\theta_0)$ and $f\in L_p$. Then there exist
strictly positive constants
$\gamma$  and $N$  both
 depending only on $d,p$, and $\delta$ such that under
Assumption \ref{assump2} ($\gamma$), for any $u\in W^{1,2}_p$
vanishing outside $Q_{ R_0}$   and satisfying
$L_0 u =f$, we have
$$
\|u_t\|_{L_p}+\|D^2u\|_{L_p}\le N\|f\|_{L_p}.
$$

\end{lemma}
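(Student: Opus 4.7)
The plan is to combine the pointwise sharp function estimate for $u_{x''x''}$ from Theorem \ref{thm4.1} with the Fefferman--Stein sharp function theorem and the Hardy--Littlewood maximal function theorem to obtain an $L_p$ estimate on $\|u_{x''x''}\|_{L_p}$, and then use Theorem \ref{theorem 9.30.1} (whose hypothesis on $\tr a$ is part of Assumption \ref{assump2} $(\gamma)$) to upgrade it to a full $\|D^2 u\|_{L_p}+\|u_t\|_{L_p}$ estimate.

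Concretely, fix $\tau\in(1,p/2)$ and let $\sigma=\tau/(\tau-1)$. Taking square roots in \eqref{eq13.5.05} and supremizing over all parabolic cylinders containing a given point $(t,x)$ gives, for every $\kappa\ge 4$, a pointwise bound
\[
\bigl((u_{x''x''})^{\#}(t,x)\bigr)^{2}\le N\kappa^{d+2}\bM(|f|^{2})(t,x)
+N\kappa^{d+2}\gamma^{1/\sigma}\bigl(\bM(|u_{xx}|^{2\tau})(t,x)\bigr)^{1/\tau}
+N\kappa^{-2\alpha}\bM(|u_{x''x''}|^{2})(t,x).
\]
Applying the Fefferman--Stein theorem to $u_{x''x''}\in L_p$ (here we use $p>2$, and $u_{x''x''}\in L_p$ follows from $u\in W^{1,2}_{p}$) and the Hardy--Littlewood maximal inequality in $L_{p/2}$ and in $L_{p/(2\tau)}$ (both legitimate since $\tau<p/2$), raising both sides to the $p/2$ power, integrating, and taking $p/2$-th roots yields
\[
\|u_{x''x''}\|_{L_p}\le N\kappa^{(d+2)/2}\|f\|_{L_p}+N\kappa^{(d+2)/2}\gamma^{1/(2\sigma)}\|u_{xx}\|_{L_p}+N\kappa^{-\alpha}\|u_{x''x''}\|_{L_p}.
\]
Now fix $\kappa$ so large that $N\kappa^{-\alpha}\le 1/2$ and absorb the last term into the left-hand side, getting
\[
\|u_{x''x''}\|_{L_p}\le N\|f\|_{L_p}+N\gamma^{1/(2\sigma)}\|u_{xx}\|_{L_p},
\]
with $N$ now depending only on $d,\delta,p$.

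To close the estimate, apply Theorem \ref{theorem 9.30.1} (whose hypothesis $|\tr a(t,x)-\tr a(t,y)|\le\gamma$ on $x''=y''$, $|x'-y'|\le R_0$ slices is exactly the first half of Assumption \ref{assump2} $(\gamma)$) to $u$ with $b^j=c=0$ and $\lambda=0$; since $u$ vanishes outside $Q_{R_0}$, the theorem gives
\[
\|D^{2}u\|_{L_p}+\|u_t\|_{L_p}\le N\|L_0 u\|_{L_p}+N\|D^{2}_{x''}u\|_{L_p}=N\|f\|_{L_p}+N\|u_{x''x''}\|_{L_p},
\]
with $N=N(d,\delta,p)$ (independent of $R_0$, by the final assertion of that theorem). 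Combining with the previous display gives
\[
\|u_{xx}\|_{L_p}+\|u_t\|_{L_p}\le N\|f\|_{L_p}+N\gamma^{1/(2\sigma)}\|u_{xx}\|_{L_p}.
\]
Choosing $\gamma=\gamma(d,\delta,p)>0$ small enough that $N\gamma^{1/(2\sigma)}\le 1/2$ allows us to absorb the $\|u_{xx}\|_{L_p}$ term into the left-hand side, completing the proof.

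The main obstacle is the interplay between the three parameters $\kappa$, $\tau$, $\gamma$: one must first reduce $\tau$ so that the maximal inequality applies in $L_{p/(2\tau)}$, then send $\kappa\to\infty$ to defeat the $\kappa^{-\alpha}$ recurrence term (which fixes $\kappa$ and hence the prefactor $N\kappa^{(d+2)/2}$), and only then choose $\gamma$ small enough to absorb the residual $\|u_{xx}\|_{L_p}$ term; since $\tau>1$ is required, this is exactly the step that forces $p>2$ and explains why the constraint $p\in(2,2+\theta_0)$ appears.
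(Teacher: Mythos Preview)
Your proof is correct and follows essentially the same route as the paper: use Theorem \ref{thm4.1} to get a pointwise sharp-function bound on $u_{x''x''}$, apply Fefferman--Stein and the Hardy--Littlewood maximal theorem (with $p>2\tau>2$), and then invoke the last assertion of Theorem \ref{theorem 9.30.1} to pass from $\|u_{x''x''}\|_{L_p}$ to the full $\|D^2u\|_{L_p}+\|u_t\|_{L_p}$. The only cosmetic difference is that the paper majorizes $|u_{x''x''}|^2$ by $|u_{xx}|^2$ in the last term of the sharp-function estimate and absorbs the $\kappa^{-\alpha}$ and $\gamma^{1/(2\sigma)}$ contributions together at the very end, whereas you keep $|u_{x''x''}|^2$ and absorb the $\kappa^{-\alpha}$ term first; both are fine, and one must in any case take $\gamma$ no larger than the threshold required by Theorem \ref{theorem 9.30.1}.
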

\begin{proof}
Let $\alpha$ be the constant in Theorem \ref{theorem 9.10.1}.
Choose $\tau \in (1, \infty)$ such that $p > 2\tau$.
  Inequality \eqref{eq13.5.05} implies that
on $\bR^{d+1}$
$$
u_{x''x''}^{\#}
\le N \kappa^{(d+2)/2} \bM^{1/2}(|f|^2)
$$
$$
+  N \kappa^{(d+2)/2}\gamma^{1/(2\sigma)} \bM^{1/(2\tau)}(|u_{xx}|^{2\tau})
+N\kappa^{-\alpha}\bM^{1/2}(|u_{xx}|^{2}).
$$
We apply the Fefferman-Stein theorem on sharp functions
and the Hardy-Littlewood maximal function theorem
to the above inequality to get
$$
\| u_{x''x''} \|_{L_p}
\le N \| u_{x''x''}^{\#} \|_{L_p}
\le N \kappa^{(d+2)/2} \| \bM(|f|^2) \|_{L_{p/2}}^{1/2}
$$
$$
+ N\kappa^{(d+2)/2}\gamma^{1/(2\sigma)} \|\bM(|u_{xx}|^{2\tau})
\|_{L_{p/(2\tau)}}^{1/(2\tau)} +N\kappa^{-\alpha}\| \bM(|u_{xx}|^2)
\|_{L_{p/2}}^{1/2}
$$
$$
\le N \kappa^{(d+2)/2} \| f \|_{L_p} + N \left(\kappa^{(d+2)/2}
\gamma^{1/(2\sigma)}+\kappa^{-\alpha}\right)
\|u_{xx}\|_{L_{p}},
$$
where in the last inequality we use the fact that $p > 2\tau$.
From this estimate  and the last assertion of
Theorem \ref{theorem 9.30.1}   we have
$$
\|u_{xx}\|_{L_p}+\|u_{t}\|_{L_p}
\le N \kappa^{(d+2)/2} \| f \|_{L_p}
$$
$$
 + N \left(\kappa^{(d+2)/2}
\gamma^{1/(2\sigma)}+\kappa^{-\alpha}\right)
\|u_{xx}\|_{L_{p}} .
$$
To finish the proof of the lemma, it suffices to choose a big enough $\kappa$ and then a small $\gamma$
so that
$$
N \left(\kappa^{(d+2)/2} \gamma^{1/(2\sigma)}+\kappa^{-\alpha}\right) \le
1/2.
$$\end{proof}

Now we are in the position to prove Theorem \ref{thm2}.

\begin{proof}[Proof of Theorem \ref{thm2}]
As is pointed out {\color{red}in} {\color{blue}at} the beginning of the section,
it suffices to prove assertion  i)  for $d\geq3$.
 Similarly to the proof of assertions i)
and ii) of Theorem \ref{thm1},  we only need  to prove
\eqref{13.10.21} for $T=\infty$ and  $u\in C_0^\infty$. This in turn is
obtained from Lemma
\ref{lem4.2} by using a partition of unity and
an idea of Agmon (see, for instance,  Section 6.3 of
\cite{Kr08}).
\end{proof}

We finish the paper with
a statement valid for any $p>2$ which
partially generalizes
Lemma \ref{lem4.2}.
Its proof is an
immediate consequence of Remark \ref{remark 10.2.1}
and the argument from the proof of Lemma \ref{lem4.2}.
\begin{theorem}
                                     \label{theorem 10.2.1}
Assume that $d\geq3$ and
 $a^{jk}$ ($=a^{kj}$) are independent of $x''$
for $j=1,2$ and $k=1,...,d$. Also assume that
$\tr a$ depends only on $t$. Let $p>2$ and
$u\in W^{1,2}_p$ be such that $u$ vanishes outside $Q_{ R_0}$.
Then there are a
strictly positive constant
$\gamma$  and $N$  both
 depending only on $d,p$, and $\delta$ such that under
Assumption \ref{assump2} ($\gamma$)    we have
$$
 \|D^2_{x''}u\|_{L_p}\le N\|L_{0}u\|_{L_p}.
$$
\end{theorem}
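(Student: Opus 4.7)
The plan is to repeat the proof of Lemma \ref{lem4.2} essentially verbatim, replacing its use of Theorem \ref{thm4.1} with the sharpened form furnished by Remark \ref{remark 10.2.1}. The whole point is that this sharpened mean-oscillation estimate is self-contained in $D^{2}_{x''}u$ alone, so one never needs to control the full Hessian $D^{2}u$ and hence never needs to invoke the last assertion of Theorem \ref{theorem 9.30.1}. It is precisely that auxiliary bootstrap which forced the restriction $p<2+\theta_{0}$ in Lemma \ref{lem4.2}; removing it allows arbitrary $p>2$.

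Concretely, I would first record what Remark \ref{remark 10.2.1} gives. Revisiting the proof of Theorem \ref{thm4.1}, the approximating matrix is $\sfa^{jk}=\bar a^{jk}\,\tr\bar a_{0}/\tr\bar a$. Under the present hypotheses, $\bar a^{jk}=a^{jk}$ whenever $j\in\{1,2\}$, and $\tr\bar a=\tr\bar a_{0}=\tr a$ depends only on $t$; hence $\sfa^{jk}=a^{jk}$ for all $j=1,2$ and $k=1,\dots,d$. Consequently, the error term $(\sfa^{jk}-a^{jk})D_{jk}u$ in the proof of Theorem \ref{thm4.1} involves only indices $j,k\geq 3$, i.e.\ only $D^{2}_{x''}u$. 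The resulting local estimate, for $u$ vanishing outside $Q_{R_{0}}$, any $(t_{0},x_{0})\in\bR^{d+1}$, any $r>0$, $\kappa\geq 4$, and $\tau,\sigma\in(1,\infty)$ with $1/\tau+1/\sigma=1$, reads
$$
\bigl(|u_{x''x''}-(u_{x''x''})_{Q_{r}(t_{0},x_{0})}|^{2}\bigr)_{Q_{r}(t_{0},x_{0})}
\leq N\kappa^{d+2}\bigl(|L_{0}u|^{2}\bigr)_{Q_{\kappa r}(t_{0},x_{0})}
$$
$$
+N\kappa^{d+2}\gamma^{1/\sigma}\bigl(|u_{x''x''}|^{2\tau}\bigr)^{1/\tau}_{Q_{\kappa r}(t_{0},x_{0})}
+N\kappa^{-2\alpha}\bigl(|u_{x''x''}|^{2}\bigr)_{Q_{\kappa r}(t_{0},x_{0})}.
$$

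Given $p>2$, I next choose $\tau\in(1,\infty)$ with $2\tau<p$, which is possible precisely because $p>2$. Taking the sharp function on the left, dominating cube averages on the right by $\bM$, and applying the Fefferman--Stein theorem in $L_{p}$ together with the Hardy--Littlewood maximal theorem in $L_{p/2}$ and $L_{p/(2\tau)}$, I obtain
$$
\|u_{x''x''}\|_{L_{p}}\leq N\kappa^{(d+2)/2}\|L_{0}u\|_{L_{p}}+N\bigl(\kappa^{(d+2)/2}\gamma^{1/(2\sigma)}+\kappa^{-\alpha}\bigr)\|u_{x''x''}\|_{L_{p}}.
$$
Choosing $\kappa$ large enough that $N\kappa^{-\alpha}\leq 1/4$ and then $\gamma$ small enough (depending on this chosen $\kappa$ and on $d,\delta,p$) that $N\kappa^{(d+2)/2}\gamma^{1/(2\sigma)}\leq 1/4$, the second term on the right is absorbed into the left-hand side and the stated bound $\|D^{2}_{x''}u\|_{L_{p}}\leq N\|L_{0}u\|_{L_{p}}$ follows.

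I do not foresee a genuine obstacle: all the hard analytic work is already contained in Theorem \ref{thm4.1} and Theorem \ref{theorem 9.10.1}, and the only conceptual step is to notice, via Remark \ref{remark 10.2.1}, that the self-referential term on the right is purely in $D^{2}_{x''}u$, which decouples the argument from the full Hessian and therefore from the $p<2+\theta_{0}$ constraint inherited from Theorem \ref{theorem 9.30.1}.
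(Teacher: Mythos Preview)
Your proposal is correct and follows exactly the approach the paper indicates: invoke Remark \ref{remark 10.2.1} to replace $|u_{xx}|^{2\tau}$ by $|u_{x''x''}|^{2\tau}$ in the mean-oscillation estimate of Theorem \ref{thm4.1}, then rerun the Fefferman--Stein/Hardy--Littlewood argument of Lemma \ref{lem4.2}. You have also correctly identified the reason the restriction $p<2+\theta_{0}$ disappears, namely that the bootstrap through Theorem \ref{theorem 9.30.1} is no longer needed once the right-hand side is self-contained in $D^{2}_{x''}u$.
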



\end{document}